\documentclass[12pt,oneside,reqno]{amsart}

\hoffset=-0.7in \textwidth=6.4in \textheight=8.8in

\usepackage{amsmath,amssymb,amsthm,textcomp}
\usepackage{amsfonts,graphicx}
\usepackage[mathscr]{eucal}
\pagestyle{plain}
\usepackage{color}
\usepackage{csquotes}
\usepackage[backend=bibtex,%
firstinits=true,%
doi=false,%
isbn=true,%
url=false,%
maxnames=99]{biblatex}%

\vfuzz=30pt
\AtEveryBibitem{\clearfield{issn}}
\AtEveryCitekey{\clearfield{issn}}
\addbibresource{mybibfile.bib}
\interdisplaylinepenalty=0

\numberwithin{equation}{section}
\DeclareNameAlias{sortname}{last-first}
\theoremstyle{definition}

\addtolength{\topmargin}{-0.7in}
\addtolength{\textheight}{0.4in}

\numberwithin{equation}{section}


\newcommand{\ncom}{\newcommand}

\ncom{\beq}{\begin{equation}}
\ncom{\eeq}{\end{equation}}
\ncom{\bea}{\begin{eqnarray*}}
\ncom{\eea}{\end{eqnarray*}}
\ncom{\beqa}{\begin{eqnarray}}
\ncom{\eeqa}{\end{eqnarray}}
\ncom{\nno}{\nonumber}
\ncom{\non}{\nonumber}
\ncom{\ds}{\displaystyle}
\ncom{\half}{\frac{1}{2}}
\ncom{\mbx}{\makebox{.25cm}}
\ncom{\hs}{\mbox{\hspace{.25cm}}}
\ncom{\rar}{\rightarrow}
\ncom{\Rar}{\Rightarrow}
\ncom{\noin}{\noindent}
\ncom{\bc}{\begin{center}}
\ncom{\ec}{\end{center}}
\ncom{\sz}{\scriptsize}
\ncom{\rf}{\ref}
\ncom{\s}{\sqrt{2}}
\ncom{\sgm}{\sigma}
\ncom{\Sgm}{\Sigma}
\ncom{\psgm}{\sigma^{\prime}}
\ncom{\dt}{\delta}
\ncom{\Dt}{\Delta}
\ncom{\lmd}{\lambda}
\ncom{\Lmd}{\Lambda}
\ncom{\Th}{\Theta}
\ncom{\e}{\eta}
\ncom{\eps}{\epsilon}
\ncom{\pcc}{\stackrel{P}{>}}
\ncom{\lp}{\stackrel{L_{p}}{>}}
\ncom{\dist}{{\rm\,dist}}
\ncom{\sspan}{{\rm\,span}}
\ncom{\re}{{\rm Re\,}}
\ncom{\im}{{\rm Im\,}}
\ncom{\sgn}{{\rm sgn\,}}
\ncom{\ba}{\begin{array}}
\ncom{\ea}{\end{array}}
\ncom{\hone}{\mbox{\hspace{1em}}}
\ncom{\htwo}{\mbox{\hspace{2em}}}
\ncom{\hthree}{\mbox{\hspace{3em}}}
\ncom{\hfour}{\mbox{\hspace{4em}}}
\ncom{\vone}{\vskip 2ex}
\ncom{\vtwo}{\vskip 4ex}
\ncom{\vonee}{\vskip 1.5ex}
\ncom{\vthree}{\vskip 6ex}
\ncom{\vfour}{\vspace*{8ex}}
\ncom{\norm}{\|\;\;\|}
\ncom{\integ}[4]{\int_{#1}^{#2}\,{#3}\,d{#4}}
\ncom{\vspan}[1]{{{\rm\,span}\{ #1 \}}}
\ncom{\dm}[1]{ {\displaystyle{#1} } }
\ncom{\ri}[1]{{#1} \index{#1}}

\newtheorem{theorem}{\bf Theorem}[section]
\newtheorem{remark}{\bf Remark}[section]

\newtheorem{proposition}{Proposition}[section]

\newtheorem{corollary}{Corollary}[section]

\newtheoremstyle
    {remarkstyle}
    {}
    {11pt}
    {}
    {}
    {\bfseries}
    {:}
    {     }
    {\thmname{#1} \thmnumber{#2} }

\theoremstyle{remarkstyle}



\def\eps{\varepsilon}

\begin{document}
\title{On Densities of the Product, Quotient and Power of Independent Subordinators}
\author[Kuldeep Kumar Kataria]{K. K. Kataria}
\address{Kuldeep Kumar Kataria, Department of Mathematics,
 Indian Institute of Technology Bombay, Powai, Mumbai 400076, INDIA.}
 \email{kulkat@math.iitb.ac.in}
\author{P. Vellaisamy}
\address{P. Vellaisamy, Department of Mathematics,
 Indian Institute of Technology Bombay, Powai, Mumbai 400076, INDIA.}
 \email{pv@math.iitb.ac.in}
\thanks{The research of K. K. Kataria was supported by a UGC fellowship no. F.2-2/98(SA-I), Govt. of India.}
\subjclass[2010]{Primary : 60E07; Secondary : 33C60}
\keywords{Stable subordinator; tempered stable subordinator; exist-times; $H$-function.}
\begin{abstract}
We obtain the closed form expressions for the densities of the product, quotient, power and scalar multiple of independent stable subordinators. Similar results for the independent inverse stable subordinators are discussed. This is achieved by expressing the densities of stable and inverse stable subordinators in terms of the Fox's $H$-function.
\end{abstract}

\maketitle
\section{Introduction}
\setcounter{equation}{0}
A stable subordinator $D_\beta(t)$, with index $\beta$, is a one  dimensional stable L{\'e}vy process with non-decreasing sample paths (see \cite[pp. 49-52]{Applebaum2004}). Subordinated processes have interesting probabilistic properties with applications in financial time series. The inverse stable subordinator is defined by $E_\beta(t)=\inf\{x>0:D_\beta(x)>t\}$. As statistical densities are basically elementary functions or the products of such functions, the theory of special functions is directly related to statistical distribution theory. The $H$-function, introduced by Fox \cite{Fox395}, has numerous applications in the theory of statistical distributions and physical science. Some areas of astrophysics where the $H$-function appears naturally are the analytic solar model, pathway analysis, gravitational instability and reaction-diffusion problems (for more details on the $H$-function and its applications see \cite{Mathai2010}).

The main aim of this paper is to show that the random variable defined as the product, quotient, power or scalar multiple of independent stable subordinators has $H$-distribution. For this purpose we first obtain the closed form expression for the density of a $\beta$-stable subordinator in terms of the Fox's $H$-function for $\beta\in(0,1)$. Some known results such as the self-similarity property of a stable subordinator \textit{etc.} are evident from such closed form. Similar representations for the densities of the tempered stable subordinator and the first-exit time of a stable subordinator, also known as the inverse stable subordinator, are obtained. Our findings complement and extend the results of Meerschaert and Scheffler \cite{Mark2004}, Meerschaert and Straka  \cite{Mark2013}.

\section{Preliminaries}
\setcounter{equation}{0}
We start with some definitions and identities, and set some notations required in the paper.

\noindent \textbf{The Fox's $H$-function.} This function is represented by the following Mellin-Barnes type contour integral (see \cite[p. 2]{Mathai2010})
\begin{equation}\label{1.2}
H(z)=\mathrm{H}^{m,n}_{p,q}\Bigg[z\left|
\begin{matrix}
    (a_i,A_i)_{1,p}\\ 
    (b_j,B_j)_{1,q}
  \end{matrix}
\right.\Bigg]=\frac{1}{2\pi i}\int_{c-i\infty}^{c+i\infty}\chi(s)z^{-s}\,ds,
\end{equation}
where
\begin{equation}\label{1.3}
\chi(s)=\frac{\prod_{i=1}^{n}\Gamma\left(1-a_i-A_is\right)\prod_{j=1}^{m}\Gamma\left(b_j+B_js\right)}{\prod_{i=n+1}^{p}\Gamma\left(a_i+A_is\right)\prod_{j=m+1}^{q}\Gamma\left(1-b_j-B_js\right)}.
\end{equation}
\noindent An equivalent definition can be obtained on substituting $w=-s$ in (\ref{1.2}). In the above definition, $i=\sqrt{-1}$, $z\neq0$, and $z^{-s}=\exp\{-s(\ln|z|+i\arg z)\}$, where $\ln|z|$ represents the natural logarithm of $|z|$ and $\arg z$ is not necessarily the principal value. Also, $m, n, p, q$ are integers satisfying $0 \leq m \leq q$ and $0 \leq n \leq p$ with $A_i,B_j>0$ for $i=1,2,\ldots,p$, $j=1,2,\ldots,q$, and $a_i$'s and $b_j$'s are complex numbers. The path of integration, in the complex $s$-plane, runs from $c-i\infty$ to $c+i\infty$ for some real number $c$ such that the singularity of $\Gamma\left(b_j+B_js\right)$ for $j=1,2,\ldots,m$ lie entirely to the left of the path and the singularity of $\Gamma\left(1-a_i-A_is\right)$ for $i=1,2,\ldots,n$ lie entirely to the right of the path. An empty product is to be interpreted as unity.

\noindent \textbf{The $H$-function distribution.} This distribution was introduced by Carter and Springer \cite{Springer542} whose density is given by
\begin{equation}
f(x)=kH(\delta x)=\frac{\delta}{\chi(1)}\mathrm{H}^{m,n}_{p,q}\Bigg[\delta x\left|
\begin{matrix}
    (a_i,A_i)_{1,p}\\ 
    (b_j,B_j)_{1,q}
  \end{matrix}
\right.\Bigg],\ \ \ \ x>0,
\end{equation}
where $\delta\neq0$ and $k=\delta/\chi(1)$ is the normalizing constant. The cumulative distribution function of the $H$-function distribution is (see \cite[p. 110]{Bodenschatz1992}, \cite[p. 104]{Cook1981})
\begin{equation}\label{2.11p}
F(x)=\frac{1}{\chi(1)}\mathrm{H}^{m,n+1}_{p+1,q+1}\Bigg[\delta x\left|
\begin{matrix}
    (1,1)&(a_i+A_i,A_i)_{1,p}\\
    (b_j+B_j,B_j)_{1,q}&(0,1)
      \end{matrix}
\right.\Bigg],
\end{equation}
provided $-B_j^{-1}b_j<1$ for all $j\in\{1,2,\ldots,m\}$. The Mellin and Laplace transforms of $H(\delta x)$ are given by (see \cite{Springer542}, \cite[pp. 47-50]{Mathai2010})
\begin{equation}\label{2.1p}
\mathcal{M}_x(H(\delta x))=\frac{\chi(s)}{\delta^s}
\end{equation}
and
\begin{equation}\label{2.2p}
\mathcal{L}_x(H(\delta x))=\frac{1}{\delta}\mathrm{H}^{n+1,m}_{q,p+1}\Bigg[\frac{s}{\delta}\left|
\begin{matrix}
    (1-b_j-B_j,B_j)_{1,q}\\ 
    (0,1)&(1-a_i-A_i,A_i)_{1,p}
  \end{matrix}
\right.\Bigg],
\end{equation}
respectively.

\noindent \textbf{Some properties of the $H$-function.} The following properties follow from the definition of the $H$-function (see \cite{Springer542}, \cite[pp. 11-12]{Mathai2010}). Let $\rho,\lambda$ be any complex numbers and $\sigma>0$. Then
\begin{eqnarray}
\mathrm{H}^{m,n}_{p,q}\Bigg[\frac{1}{z}\left|
\begin{matrix}
    (a_i,A_i)_{1,p}\\ 
    (b_j,B_j)_{1,q}
  \end{matrix}
\right.\Bigg]&=&\mathrm{H}^{n,m}_{q,p}\Bigg[z\left|
\begin{matrix}
    (1-b_j,B_j)_{1,q}\\
    (1-a_i,A_i)_{1,p}    
  \end{matrix}
\right.\Bigg],\label{2.8}\\
\mathrm{H}^{m,n}_{p,q}\Bigg[z^\sigma\left|
\begin{matrix}
    (a_i,A_i)_{1,p}\\ 
    (b_j,B_j)_{1,q}
  \end{matrix}
\right.\Bigg]&=&\sigma^{-1}\mathrm{H}^{m,n}_{p,q}\Bigg[z\left|
\begin{matrix}
    \left(a_i,\sigma^{-1}A_i\right)_{1,p}\\ 
    \left(b_j,\sigma^{-1}B_j\right)_{1,q}
  \end{matrix}
\right.\Bigg],\label{2.9}\\
z^\rho \mathrm{H}^{m,n}_{p,q}\Bigg[z\left|
\begin{matrix}
    (a_i,A_i)_{1,p}\\ 
    (b_j,B_j)_{1,q}
  \end{matrix}
\right.\Bigg]&=&\mathrm{H}^{m,n}_{p,q}\Bigg[z\left|
\begin{matrix}
    (a_i+\rho A_i,A_i)_{1,p}\\ 
    (b_j+\rho B_j,B_j)_{1,q}
  \end{matrix}
\right.\Bigg].\label{2.10}
\end{eqnarray}
Also, the $k$-th derivative of the $H$-function is given by (see \cite[p. 13]{Mathai2010})
\begin{equation}\label{2.11}
\frac{\mathrm{d}^k}{\mathrm{d}z^k}z^{\rho-1}\mathrm{H}^{m,n}_{p,q}\Bigg[\lambda z^{\sigma}\left|
\begin{matrix}
    (a_i,A_i)_{1,p}\\ 
    (b_j,B_j)_{1,q}
  \end{matrix}
\right.\Bigg]
=(-1)^kz^{\rho-k-1}\mathrm{H}^{m+1,n}_{p+1,q+1}\Bigg[\lambda z^{\sigma}\left|
\begin{matrix}
    (a_i,A_i)_{1,p}&(1-\rho,\sigma)\\ 
    (1-\rho+k,\sigma)&(b_j,B_j)_{1,q}
  \end{matrix}
\right.\Bigg].
\end{equation}
The following identity will be used later (see \cite[p. 23]{Mathai2010})
\begin{equation}\label{2.7}
\mathrm{H}^{1,0}_{0,1}\Bigg[z\left|
\begin{matrix}
     \\ 
    (b,B)
  \end{matrix}
\right.\Bigg]=\frac{1}{B}z^{b/B}e^{-z^{1/B}}.
\end{equation}
\noindent \textbf{Generalized Wright function and $M$-Wright function.} Let $\alpha_i,\beta_j\in\mathbb{R}\setminus\{0\}$ and $a_i,b_
j\in\mathbb{C}$, $i=1,2,\ldots,p;\ j=1,2,\ldots,q$. Then the generalized Wright function (see \cite[Eq. (1.1)]{Kilbas2002437}) is defined by
\begin{equation*}\label{1.1ku}
{}_{p}\Psi_q(z)={}_{p}\Psi_q\Bigg[z\left|
\begin{matrix}
    (a_i,\alpha_i)_{1,p}\\ 
    (b_j,\beta_j)_{1,q}
  \end{matrix}
\right.\Bigg]
:=\sum_{n=0}^{\infty}\frac{\prod_{i=1}^{p}\Gamma(a_i+n\alpha_i)}{\prod_{j=1}^{q}\Gamma(b_j+n\beta_j)}\frac{z^n}{n!},\ \ z\in\mathbb{C}.
\end{equation*}
The integral representation of ${}_{p}\Psi_q(z)$ in terms of Mellin-Barnes integral (see \cite[Eq. (1.5)]{Kilbas2002437}) is given by
\begin{equation}\label{kpp}
{}_{p}\Psi_q\Bigg[z\left|
\begin{matrix}
    (a_i,\alpha_i)_{1,p}\\ 
    (b_j,\beta_j)_{1,q}
  \end{matrix}
\right.\Bigg]=\frac{1}{2\pi i}\int_{c-i\infty}^{c+i\infty}\frac{\Gamma(s)\prod_{i=1}^{p}\Gamma(a_i-\alpha_i s)}{\prod_{j=1}^{q}\Gamma(b_j-\beta_j s)}(-z)^{-s}\,ds.
\end{equation}
The $M$-Wright function $M_\beta(z)$, $0<\beta<1$, (see \cite[Appendix A.5]{Gorenflo2015}) is defined as
\begin{equation*}\label{1.1uu}
M_\beta(z)={}_{0}\Psi_1\Bigg[-z\left|
\begin{matrix}
    \\ 
    (1-\beta,-\beta)
  \end{matrix}
\right.\Bigg]=\sum_{n=0}^{\infty}\frac{1}{\Gamma(1-\beta-n\beta)}\frac{(-z)^n}{n!},\ \ z\in\mathbb{C}.
\end{equation*}
The Euler's reflection and the Legendre duplication formula for the gamma function are
\begin{equation}\label{2.7sd}
\Gamma(z)\Gamma(1-z)=\frac{\pi}{\sin z\pi}
\end{equation}
and
\begin{equation}\label{2.7na}
\Gamma(2z)=\frac{2^{2z-1}}{\sqrt{\pi}}\Gamma(z)\Gamma(z+1/2),
\end{equation}
respectively.
\section{Independent stable subordinators}
\setcounter{equation}{0}
From L\'evy-Khinchin representation (see \cite[pp. 49-50]{Applebaum2004}), the Laplace transform of a subordinator $D(t)$ with density $f(x,t)$ is given by the following expression:
\begin{equation*}
\mathcal{L}_x(f(x,t))=\mathbb{E}\left(e^{-sD(t)}\right)=\int_0^\infty f(x,t)e^{-sx}\,\mathrm{d}x=e^{-t\psi(s)},
\end{equation*}
where $\psi(s)$ denotes the Laplace exponent given by
\begin{equation*}
\psi(s)=bs+\int_0^\infty \left(1-e^{-sx}\right)\,\lambda(\mathrm{d}x),\ \ \ \ s>0.
\end{equation*}
Here $b\geq 0$ denotes the drift whereas $\lambda$ is a L\'evy measure. For the following L\'evy measure
\begin{equation*}
{\lambda_\beta(\mathrm{d}x)}=\frac{\beta}{\Gamma(1-\beta)x^{1+\beta}}\mathbb{I}_{(0,\infty)}(x)\,{\mathrm{d}x},\ \ \ \ 0<\beta<1,
\end{equation*}
the identity $s^\beta=\int_0^\infty \left(1-e^{-sx}\right)\,\lambda_\beta (\mathrm{d}x)$ holds. Since a $\beta$-stable subordinator $D_\beta(t)$, $0<\beta<1$, is a subordinator with zero drift and L\'evy measure $\lambda_\beta$, its Laplace exponent is $\psi(s)=s^\beta$. This leads to the integral representation for the density $f_\beta (x,t)$ of a $\beta$-stable subordinator.
\begin{proposition}\label{t1}
The density of a $\beta$-stable subordinator, $0<\beta<1$, admits the following integral form
\begin{equation*}
f_\beta (x,t)=\frac{1}{\pi}\int_0^{\infty}e^{-ux-tu^\beta\cos\beta\pi}\sin(tu^\beta\sin\beta\pi)\,\mathrm{d}u,\ \ \ \ x>0.
\end{equation*}
\end{proposition}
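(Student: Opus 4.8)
The plan is to recover $f_\beta(\cdot,t)$ by inverting its Laplace transform, which the discussion preceding the statement identifies as $\mathcal{L}_x(f_\beta(x,t))=e^{-t\psi(s)}=e^{-ts^\beta}$. Fix $t>0$ and $x>0$. By the Bromwich inversion formula,
\[
f_\beta(x,t)=\frac{1}{2\pi i}\int_{c-i\infty}^{c+i\infty}e^{sx-ts^\beta}\,\mathrm{d}s,
\]
for any $c>0$, where $s^\beta$ is taken with its branch cut along the negative real axis, so that the integrand is analytic on the cut plane $\mathbb{C}\setminus(-\infty,0]$, which contains the vertical line $\Re s=c$.

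Next I would deform the Bromwich line onto a Hankel-type contour hugging the branch cut: it runs in along one edge of the negative real axis to a small circle of radius $\epsilon$ about the origin, encircles the origin, and runs back out along the other edge, the two far ends being joined to the original line by large circular arcs. Since $e^{sx-ts^\beta}$ has no singularities in the cut plane, Cauchy's theorem equates the Bromwich integral with the (oriented) contributions of the two edges, provided the large arcs and the small circle contribute nothing in the limits $R\to\infty$ and $\epsilon\to0$.

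On the two edges I would put $s=ue^{\pm i\pi}=-u$ with $u>0$, so that $e^{sx}=e^{-ux}$ while $s^\beta=u^\beta e^{\pm i\beta\pi}$. Tracking the orientation, the edge contributions combine to give
\[
f_\beta(x,t)=\frac{1}{2\pi i}\int_0^\infty e^{-ux}\left(e^{-tu^\beta e^{-i\beta\pi}}-e^{-tu^\beta e^{i\beta\pi}}\right)\mathrm{d}u.
\]
Writing $e^{\mp i\beta\pi}=\cos\beta\pi\mp i\sin\beta\pi$ and applying $e^{i\theta}-e^{-i\theta}=2i\sin\theta$ with $\theta=tu^\beta\sin\beta\pi$ turns the bracket into $2i\,e^{-tu^\beta\cos\beta\pi}\sin(tu^\beta\sin\beta\pi)$; the factor $2i$ cancels the $2\pi i$, and the claimed formula follows.

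The main obstacle is rigorously justifying the deformation, in particular the vanishing of the large arcs. Here the hypotheses $x>0$ and $0<\beta<1$ are essential: on an arc $s=Re^{i\theta}$ in the left half-plane one has $|e^{sx}|=e^{xR\cos\theta}$ decaying exponentially in $R$, while $|e^{-ts^\beta}|=e^{-tR^\beta\cos(\beta\theta)}$ may grow, but only at the subexponential rate $e^{O(R^\beta)}$; since $\beta<1$, the exponential decay of $e^{sx}$ dominates, so the arc integrals tend to $0$. The small-circle contribution vanishes because the integrand stays bounded near the origin while the arc length is $O(\epsilon)$. Once these estimates are in hand, the remaining computation is routine.
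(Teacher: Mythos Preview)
Your proposal is correct and is essentially the same approach the paper points to: the paper's proof simply refers to Theorem~2.1 and Remark~2.1 of Kumar and Vellaisamy \cite{Arun2015}, which invert the Laplace transform $e^{-ts^\beta}$ by deforming the Bromwich line onto a Hankel contour wrapping the branch cut at the origin, exactly as you outline. Your identification of the arc estimates as the main technical point, and the reason $0<\beta<1$ and $x>0$ make them work, matches the argument in that reference.
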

\begin{proof}
The proof follows on similar lines as the proof of Theorem 2.1 in \cite{Arun2015} by choosing a contour similar to Figure 1 with branch point at origin (see Remark 2.1 in \cite{Arun2015}).
\end{proof}
\noindent Next we obtain the Mellin transform of the density of a stable subordinator.
\begin{proposition}\label{t2}
The Mellin transform of the density of a $\beta$-stable subordinator, $0<\beta<1/2$, is
\begin{equation*}
\mathcal{M}_x\left(f_\beta (x,t)\right)=\frac{1}{\beta t^{(1-s)/\beta}}\frac{\Gamma\left((1-s)/\beta\right)}{\Gamma\left(1-s\right)}.
\end{equation*}
\end{proposition}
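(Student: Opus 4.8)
The plan is to start from the integral representation of $f_\beta(x,t)$ furnished by Proposition \ref{t1} and compute the Mellin transform $\mathcal{M}_x(f_\beta(x,t))=\int_0^\infty x^{s-1}f_\beta(x,t)\,\mathrm{d}x$ directly. Substituting that formula produces the double integral
\begin{equation*}
\mathcal{M}_x(f_\beta(x,t))=\frac{1}{\pi}\int_0^\infty x^{s-1}\int_0^\infty e^{-ux}e^{-tu^\beta\cos\beta\pi}\sin(tu^\beta\sin\beta\pi)\,\mathrm{d}u\,\mathrm{d}x.
\end{equation*}
The first step is to interchange the two integrals; once this is justified, the inner $x$-integral is the elementary Gamma integral $\int_0^\infty x^{s-1}e^{-ux}\,\mathrm{d}x=\Gamma(s)u^{-s}$, valid for $\Re(s)>0$, which leaves
\begin{equation*}
\mathcal{M}_x(f_\beta(x,t))=\frac{\Gamma(s)}{\pi}\int_0^\infty u^{-s}e^{-tu^\beta\cos\beta\pi}\sin(tu^\beta\sin\beta\pi)\,\mathrm{d}u.
\end{equation*}

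Next I would linearise the product of the exponential and the sine by writing $e^{-tu^\beta\cos\beta\pi}\sin(tu^\beta\sin\beta\pi)=\Im\exp\!\big(-tu^\beta e^{-i\beta\pi}\big)$, so that (for real $s$ in the convergence strip) the factor $u^{-s}$ can be pulled outside the imaginary part. The substitution $v=u^\beta$, $\mathrm{d}u=\beta^{-1}v^{1/\beta-1}\,\mathrm{d}v$, then turns the $u$-integral into
\begin{equation*}
\frac{1}{\beta}\,\Im\int_0^\infty v^{(1-s)/\beta-1}\exp\!\big(-t e^{-i\beta\pi}v\big)\,\mathrm{d}v=\frac{1}{\beta}\,\Im\left(\frac{\Gamma((1-s)/\beta)}{\big(t e^{-i\beta\pi}\big)^{(1-s)/\beta}}\right),
\end{equation*}
a Gamma integral with complex rate $a=t e^{-i\beta\pi}$.

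This is exactly where the hypothesis $0<\beta<1/2$ enters: the identity $\int_0^\infty v^{\mu-1}e^{-av}\,\mathrm{d}v=\Gamma(\mu)a^{-\mu}$ requires $\Re(a)=t\cos\beta\pi>0$, which holds precisely because $\cos\beta\pi>0$ when $\beta<1/2$. Writing $a^{-(1-s)/\beta}=t^{-(1-s)/\beta}e^{i\pi(1-s)}$ and using $\Im\,e^{i\pi(1-s)}=\sin\pi(1-s)=\sin\pi s$, the $u$-integral collapses to $\beta^{-1}t^{-(1-s)/\beta}\Gamma((1-s)/\beta)\sin\pi s$. Feeding this back gives
\begin{equation*}
\mathcal{M}_x(f_\beta(x,t))=\frac{\Gamma(s)\sin\pi s}{\pi\beta\, t^{(1-s)/\beta}}\,\Gamma\!\big((1-s)/\beta\big),
\end{equation*}
and one application of Euler's reflection formula (\ref{2.7sd}) in the form $\Gamma(s)\sin\pi s=\pi/\Gamma(1-s)$ yields the stated expression. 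The identity is thus established first for real $s$ in the strip of absolute convergence and then extended to complex $s$ by analytic continuation.

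The main obstacle I anticipate is not the algebra but the justification of the interchange of integrals and the identification of the correct vertical strip for $s$. Estimating $|\sin\theta|\le\min\{1,|\theta|\}$, near $u=0$ the integrand behaves like a constant times $u^{\beta-\Re(s)}$, forcing $\Re(s)<1+\beta$, while the decaying exponential $e^{-tu^\beta\cos\beta\pi}$ controls the tail and the $x$-integral forces $\Re(s)>0$. Inside the strip $0<\Re(s)<1+\beta$ the double integral converges absolutely, so Fubini applies and the computation above is rigorous; analytic continuation then removes the restriction to real $s$.
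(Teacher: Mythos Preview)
Your proposal is correct and follows essentially the same route as the paper's own proof in Appendix~A.1: substitute the integral representation of Proposition~\ref{t1}, pass to the imaginary part of $\exp(-tu^\beta e^{-i\beta\pi})$, evaluate the $x$-integral as $\Gamma(s)u^{-s}$, substitute $v=u^\beta$, identify the resulting Gamma integral with complex rate, and finish with the reflection formula~(\ref{2.7sd}). Your version is in fact more careful than the paper's, since you explicitly isolate the role of the hypothesis $0<\beta<1/2$ (needed for $\Re(t e^{-i\beta\pi})>0$) and discuss the Fubini justification and convergence strip, points the paper leaves implicit.
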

\begin{proof}
See Appendix \textbf{A.1}.
\end{proof}
\begin{remark}
The integral representation and the Mellin transform for the density of a stable distribution is given in \cite[Theorem 2.2.1, p. 70]{Zolotarev1986} and \cite[Theorem 2.6.3, p. 118]{Zolotarev1986}, respectively. The Fox function representation for the density of a stable distribution is given by Schneider \cite{Schneider1986}.
\end{remark}
Now by using the Mellin inversion formula and Proposition \ref{t2}, the closed form representation of the density of a $\beta$-stable subordinator in terms of the Fox's $H$-function is given by
\begin{eqnarray}\label{zzz}
f_\beta (x,t)&=&\frac{1}{2\pi i}\int_{c-i\infty}^{c+i\infty}\mathcal{M}_x\left(f_\beta (x,t)\right)x^{-s}\,\mathrm{d}s\nonumber\\
&=&\frac{1}{\beta t^{1/\beta}}\frac{1}{2\pi i}\int_{c-i\infty}^{c+i\infty}\frac{\Gamma\left(1/\beta-s/\beta\right)}{\Gamma\left(1-s\right)}\left(\frac{x}{t^{1/\beta}}\right)^{-s}\,\mathrm{d}s\nonumber\\
&=&\frac{1}{\beta t^{1/\beta}}\mathrm{H}^{0,1}_{1,1}\Bigg[\frac{x}{t^{1/\beta}}\left|
\begin{matrix}
    \left(1-1/\beta,1/\beta\right)\\ 
    (0,1)
  \end{matrix}
\right.\Bigg],\ \ x>0,\ \ 0<\beta<1/2.
\end{eqnarray}
The density of the L{\'e}vy subordinator ($\beta=1/2$) is given by the L{\'e}vy distribution (see \cite[p. 50]{Applebaum2004}) as follows:
\begin{eqnarray*}
f_{\frac{1}{2}}(x,t)&=&\frac{t}{2\sqrt{\pi}}x^{-3/2}e^{-t^2/4x},\ \ x>0\\
&=&\frac{4}{t^2\sqrt{\pi}}\mathrm{H}^{1,0}_{0,1}\Bigg[\frac{t^{2}}{4x}\left|
\begin{matrix}
    \\
    \left(3/2,1\right)
\end{matrix}
\right.\Bigg],\ \ \ \ \mathrm{(using\ (\ref{2.7}))}\nonumber\\
&=&\frac{4}{t^2\sqrt{\pi}}\mathrm{H}^{0,1}_{1,0}\Bigg[\frac{4x}{t^{2}}\left|
\begin{matrix}
    \left(-1/2,1\right)\\
    {}
\end{matrix}
\right.\Bigg],\ \ \ \ \mathrm{(using\ (\ref{2.8}))}\nonumber\\
&=&\frac{4}{ t^{2}\sqrt{\pi}}\frac{1}{2\pi i}\int_{c-i\infty}^{c+i\infty}\Gamma\left(3/2-s\right)\left(\frac{4x}{t^{2}}\right)^{-s}\,\mathrm{d}s\\
&=&\frac{2}{ t^{2}}\frac{1}{2\pi i}\int_{c-i\infty}^{c+i\infty}\frac{\Gamma\left(2(1-s)\right)}{\Gamma\left(1-s\right)}\left(\frac{x}{t^{2}}\right)^{-s}\,\mathrm{d}s,\ \ \ \ \mathrm{(using\ (\ref{2.7na}))}\nonumber\\
&=&\frac{2}{t^{2}}\mathrm{H}^{0,1}_{1,1}\Bigg[\frac{x}{t^{2}}\left|
\begin{matrix}
    \left(-1,2\right)\\ 
    (0,1)
  \end{matrix}
\right.\Bigg],
\end{eqnarray*}
\textit{i.e.} (\ref{zzz}) can be extended for index $\beta\in(0,1/2]$.

Next we see that this result can be extended for index $0<\beta<1$. Gorenflo and Mainardi \cite[Eq. (4.7)]{Gorenflo2015} express the density of $D_\beta(t)$, $0<\beta<1$, in terms of the $M$-Wright function. Hence,
\begin{eqnarray*}
f_\beta (x,t)&=&\beta tx^{-(\beta+1)}M_\beta(tx^{-\beta}),\ \ x>0\\
&=&\beta tx^{-(\beta+1)}{}_{0}\Psi_1\Bigg[-tx^{-\beta}\left|
\begin{matrix}
    \\ 
    (1-\beta,-\beta)
  \end{matrix}
\right.\Bigg]\\
&=&\beta tx^{-(\beta+1)}\frac{1}{2\pi i}\int_{c-i\infty}^{c+i\infty}\frac{\Gamma\left(s\right)}{\Gamma\left(1-\beta+\beta s\right)}\left(tx^{-\beta}\right)^{-s}\,\mathrm{d}s,\ \ \ \ \mathrm{(using\ (\ref{kpp}))}\\
&=&\beta tx^{-(\beta+1)}\mathrm{H}^{1,0}_{1,1}\Bigg[tx^{-\beta}\left|
\begin{matrix}
    \left(1-\beta,\beta\right)\\ 
    (0,1)
  \end{matrix}
\right.\Bigg]\\
&=&\beta tx^{-(\beta+1)}\mathrm{H}^{0,1}_{1,1}\Bigg[\frac{x^{\beta}}{t}\left|
\begin{matrix}
    \left(1,1\right)\\ 
    (\beta,\beta)
  \end{matrix}
\right.\Bigg],\ \ \ \ \mathrm{(using\ (\ref{2.8}))}\\
&=&\frac{\beta}{x}\mathrm{H}^{0,1}_{1,1}\Bigg[\frac{x^{\beta}}{t}\left|
\begin{matrix}
    \left(0,1\right)\\ 
    (0,\beta)
  \end{matrix}
\right.\Bigg],\ \ \ \ \mathrm{(using\ (\ref{2.10}))}\\
&=&\frac{1}{t^{1/\beta}}\mathrm{H}^{0,1}_{1,1}\Bigg[\frac{x}{t^{1/\beta}}\left|
\begin{matrix}
    \left(-1/\beta,1/\beta\right)\\ 
    (-1,1)
  \end{matrix}
\right.\Bigg],\ \ \ \ \mathrm{(using\ (\ref{2.9})\ and\ (\ref{2.10}))}\\
&=&\frac{1}{t^{1/\beta}}\frac{1}{2\pi i}\int_{c-i\infty}^{c+i\infty}\frac{\Gamma\left(1+1/\beta-1/\beta s\right)}{\Gamma\left(2-s\right)}\left(xt^{-1/\beta}\right)^{-s}\,\mathrm{d}s\\
&=&\frac{1}{\beta t^{1/\beta}}\frac{1}{2\pi i}\int_{c-i\infty}^{c+i\infty}\frac{\Gamma\left(1/\beta-1/\beta s\right)}{\Gamma\left(1-s\right)}\left(xt^{-1/\beta}\right)^{-s}\,\mathrm{d}s\\
&=&\frac{1}{\beta t^{1/\beta}}\mathrm{H}^{0,1}_{1,1}\Bigg[\frac{x}{t^{1/\beta}}\left|
\begin{matrix}
    \left(1-1/\beta,1/\beta\right)\\ 
    (0,1)
  \end{matrix}
\right.\Bigg]
\end{eqnarray*}
and thus (\ref{zzz}) can further be extended for index $\beta\in(0,1)$.

\noindent The above discussed results can be summarized in the following theorem. 
\begin{theorem}\label{t3}
The density of a $\beta$-stable subordinator $D_\beta(t)$, $0<\beta<1$, in terms of the $H$-function is given by
\begin{equation}\label{3.1aa}
f_\beta (x,t)=\frac{1}{\beta t^{1/\beta}}\mathrm{H}^{0,1}_{1,1}\Bigg[\frac{x}{t^{1/\beta}}\left|
\begin{matrix}
    \left(1-1/\beta,1/\beta\right)\\ 
    (0,1)
  \end{matrix}
\right.\Bigg],\ \ \ \ x>0.
\end{equation}
\end{theorem}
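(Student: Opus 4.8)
The plan is to build the representation (\ref{3.1aa}) by matching the Mellin transform of $f_\beta$ against the defining Mellin--Barnes integral (\ref{1.2})--(\ref{1.3}) of the $H$-function, and then to propagate the resulting formula across the whole range $0<\beta<1$ in three stages: a direct inversion on $(0,1/2)$, a separate treatment of the endpoint $\beta=1/2$, and an extension to the full interval via the $M$-Wright representation.

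First, on $0<\beta<1/2$ I would start from Proposition \ref{t2},
\[
\mathcal{M}_x\!\left(f_\beta(x,t)\right)=\frac{1}{\beta\,t^{(1-s)/\beta}}\frac{\Gamma\!\left((1-s)/\beta\right)}{\Gamma(1-s)},
\]
pull out the deterministic scale $t^{-1/\beta}$, and apply the Mellin inversion formula to obtain
\[
f_\beta(x,t)=\frac{1}{\beta\,t^{1/\beta}}\frac{1}{2\pi i}\int_{c-i\infty}^{c+i\infty}\frac{\Gamma\!\left(1/\beta-s/\beta\right)}{\Gamma(1-s)}\left(\frac{x}{t^{1/\beta}}\right)^{-s}ds.
\]
The key point is that this integrand is exactly of the form (\ref{1.3}) with $m=0$, $n=p=q=1$ and parameters $(a_1,A_1)=(1-1/\beta,1/\beta)$, $(b_1,B_1)=(0,1)$: one checks $1-a_1-A_1s=(1-s)/\beta$ and $1-b_1-B_1s=1-s$, so that $\chi(s)=\Gamma((1-s)/\beta)/\Gamma(1-s)$. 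Since $m=0$ there are no left poles to position, while the poles of $\Gamma((1-s)/\beta)$, located at $s=1+k\beta$ for $k\ge 0$, lie to the right of a contour with $c<1$, meeting the separation requirement. This identifies the integral as $\mathrm{H}^{0,1}_{1,1}$ and gives (\ref{3.1aa}) on $(0,1/2)$.

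Next I would close the endpoint $\beta=1/2$ using the explicit L\'evy density $f_{1/2}(x,t)=\frac{t}{2\sqrt{\pi}}x^{-3/2}e^{-t^{2}/4x}$: the exponential--power factor is rewritten through the elementary identity (\ref{2.7}), the argument is inverted by (\ref{2.8}), and the resulting doubled gamma factor is collapsed with the Legendre duplication formula (\ref{2.7na}), which returns precisely the $\beta=1/2$ instance of (\ref{3.1aa}).

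Finally, to cover all $0<\beta<1$ at once, I would invoke the $M$-Wright representation of Gorenflo and Mainardi, $f_\beta(x,t)=\beta\,t\,x^{-(\beta+1)}M_\beta(tx^{-\beta})$, write $M_\beta={}_{0}\Psi_1$ via its Mellin--Barnes integral (\ref{kpp}) to recognize an $\mathrm{H}^{1,0}_{1,1}$, and then apply the argument-inversion identity (\ref{2.8}), the power-scaling identity (\ref{2.9}), and the shift identity (\ref{2.10}) in sequence to transport the parameters into the canonical form (\ref{3.1aa}). The main obstacle I anticipate is not any single identity but the careful bookkeeping of the $(a_i,A_i)$ and $(b_j,B_j)$ parameters through this chain of transformations, together with verifying that the pole-separation condition on the contour is preserved at every step, so that all three stages genuinely produce the same $H$-function and the representation holds uniformly for $\beta\in(0,1)$.
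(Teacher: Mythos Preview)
Your proposal is correct and follows essentially the same three-stage argument that the paper gives in the discussion preceding Theorem \ref{t3}: Mellin inversion from Proposition \ref{t2} for $0<\beta<1/2$, the explicit L\'evy density at $\beta=1/2$ reduced via (\ref{2.7}), (\ref{2.8}) and the duplication formula (\ref{2.7na}), and the $M$-Wright representation transformed through (\ref{kpp}), (\ref{2.8}), (\ref{2.9}), (\ref{2.10}) for the full range. The only cosmetic difference is the precise ordering of the $H$-function identities in the third stage, but the route and the ingredients are the same.
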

\begin{corollary}
The density $f_\beta (x,1)$ of $D_\beta(1)$, $0<\beta<1$, is
\begin{equation}\label{3.1aaa}
f_\beta (x,1)=\frac{1}{\beta}\mathrm{H}^{0,1}_{1,1}\Bigg[x\left|
\begin{matrix}
    \left(1-1/\beta,1/\beta\right)\\ 
    (0,1)
  \end{matrix}
\right.\Bigg],\ \ \ \ x>0.
\end{equation}
\end{corollary}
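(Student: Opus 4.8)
The plan is to recognize that this Corollary is simply the $t=1$ specialization of Theorem \ref{t3}. Since that theorem already supplies the closed-form representation (\ref{3.1aa}) for the density $f_\beta(x,t)$ at an arbitrary time $t>0$ and every $\beta\in(0,1)$, I would derive the stated formula by direct substitution of $t=1$ into (\ref{3.1aa}), with no additional machinery required.

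Carrying this out, upon setting $t=1$ one has $t^{1/\beta}=1$, so that the scaling prefactor $\frac{1}{\beta t^{1/\beta}}$ collapses to $\frac{1}{\beta}$, while the argument $\frac{x}{t^{1/\beta}}$ of the Fox $H$-function reduces to $x$; the parameter pairs $\left(1-1/\beta,1/\beta\right)$ and $(0,1)$ are unaffected by the value of $t$. This produces exactly the expression (\ref{3.1aaa}) for $f_\beta(x,1)$. There is no genuine obstacle here: the only point meriting a word is that (\ref{3.1aa}) is valid for all $t>0$, and in particular at $t=1$, so the substitution is legitimate throughout the range $0<\beta<1$.
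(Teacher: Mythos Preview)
Your proposal is correct and matches the paper's approach: the corollary is stated without proof in the paper, which signals that it is meant to follow immediately from Theorem~\ref{t3} by setting $t=1$, exactly as you do.
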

\begin{remark}\label{t6}
On using (\ref{2.2p}), (\ref{2.7}) and (\ref{3.1aa}), we can obtain a well known result \textit{i.e.} the Laplace transform of the density of a $\beta$-stable subordinator
\begin{equation*}
\mathcal{L}_x\left(f_\beta (x,t)\right)=\frac{1}{\beta}\mathrm{H}^{1,0}_{0,1}\Bigg[t^{1/\beta} s\left|
\begin{matrix}
    \\ 
    (0,1/\beta)
  \end{matrix}
\right.\Bigg]=e^{-ts^\beta}.
\end{equation*}
\end{remark}
\begin{proposition}\label{t4}
The Mellin and Laplace transforms of the density of a $\beta$-stable subordinator, $0<\beta<1$, with respect to the time variable are
\begin{equation*}
\mathcal{M}_t\left(f_\beta (x,t)\right)=\frac{1}{x^{1-\beta s}}\frac{\Gamma(s)}{\Gamma(\beta s)}
\end{equation*}
and 
\begin{equation*}
\mathcal{L}_t\left(f_\beta (x,t)\right)=x^{\beta-1}\mathrm{H}^{1,1}_{1,2}\Bigg[x^\beta s\left|
\begin{matrix}
    (0,1)\\ 
    (0,1)&(1-\beta,\beta)
  \end{matrix}
\right.\Bigg],
\end{equation*}
respectively.
\end{proposition}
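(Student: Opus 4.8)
The plan is to treat $f_\beta(x,t)$ as a function of the single variable $t$ (with $x>0$ held fixed) of the form $c\,\mathrm{H}(\delta t)$, and then read off both transforms from the general formulas (\ref{2.1p}) and (\ref{2.2p}). The chain of identities preceding Theorem \ref{t3} already supplies $f_\beta(x,t)=\beta t x^{-(\beta+1)}\mathrm{H}^{1,0}_{1,1}\big[tx^{-\beta}\,\big|\,(1-\beta,\beta);(0,1)\big]$, but the polynomial prefactor $t$ must first be absorbed into the $H$-function. Writing $t=x^{\beta}\cdot(tx^{-\beta})$ and applying (\ref{2.10}) with $\rho=1$, I would recast this as
\begin{equation*}
f_\beta(x,t)=\frac{\beta}{x}\,\mathrm{H}^{1,0}_{1,1}\Bigg[tx^{-\beta}\left|\begin{matrix}(1,\beta)\\(1,1)\end{matrix}\right.\Bigg],
\end{equation*}
which is exactly of the shape $\tfrac{\beta}{x}\mathrm{H}(\delta t)$ with $\delta=x^{-\beta}$.

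For the Mellin transform I would apply (\ref{2.1p}) directly: for this $H$-function the associated $\chi$ is $\chi(s)=\Gamma(1+s)/\Gamma(1+\beta s)$, so $\mathcal{M}_t(f_\beta)=\tfrac{\beta}{x}\,\chi(s)\delta^{-s}=\beta x^{\beta s-1}\Gamma(1+s)/\Gamma(1+\beta s)$. The stated form then follows from the single gamma recurrence $\Gamma(1+z)=z\Gamma(z)$ applied to $\Gamma(1+s)=s\Gamma(s)$ and $\Gamma(1+\beta s)=\beta s\,\Gamma(\beta s)$, the factors $\beta$ and $s$ cancelling to leave $x^{\beta s-1}\Gamma(s)/\Gamma(\beta s)$.

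For the Laplace transform I would feed the same representation into (\ref{2.2p}) with $m=1,n=0,p=1,q=1$, $(a_1,A_1)=(1,\beta)$, $(b_1,B_1)=(1,1)$ and $\delta=x^{-\beta}$, obtaining $\mathcal{L}_t(f_\beta)=\beta x^{\beta-1}\mathrm{H}^{1,1}_{1,2}\big[x^{\beta}s\,\big|\,(-1,1);(0,1),(-\beta,\beta)\big]$. This is not yet the asserted expression; to finish I would pass to the Mellin--Barnes integrand and use the same recurrence in the form $\beta\,\Gamma(2-\sigma)/\Gamma(1+\beta-\beta\sigma)=\Gamma(1-\sigma)/\Gamma(\beta-\beta\sigma)$ (using $1+\beta-\beta\sigma=1+\beta(1-\sigma)$), which shifts the parameters to $(0,1);(0,1),(1-\beta,\beta)$ and removes the prefactor $\beta$, yielding the claimed $\mathrm{H}^{1,1}_{1,2}$.

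The routine part is the bookkeeping of parameters; the genuine points to watch are (i) checking that the absorption step and the two transform formulas hold on a common vertical strip (the Mellin transform lives on $\Re s>0$, while the Laplace computation implicitly requires $\Re w<1$, so the contour must sit in $0<\Re w<1$), and (ii) the fact that (\ref{2.2p}) does not deliver the stated $H$-function verbatim, the final parameter shift via the gamma recurrence being where the only real care is needed. As an independent cross-check I would redo the Laplace computation from the inverse Mellin (in $t$) representation $\tfrac{1}{2\pi i}\int \mathcal{M}_t(f_\beta)(w)\,t^{-w}\,dw$, interchange the order of integration, evaluate $\int_0^{\infty}e^{-st}t^{-w}\,dt=\Gamma(1-w)s^{w-1}$, and set $w=1-\sigma$; this reproduces the stated $\mathrm{H}^{1,1}_{1,2}$ directly and also makes the Fubini justification transparent.
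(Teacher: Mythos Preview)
Your proposal is correct and follows essentially the same route as the paper: rewrite $f_\beta(x,t)$ as a constant times an $H$-function of $t/x^{\beta}$ and then read off the transforms from (\ref{2.1p}) and (\ref{2.2p}). The only cosmetic difference is that the paper pushes the manipulations one step further to the normalization $\frac{1}{x}\,\mathrm{H}^{1,0}_{1,1}\big[t x^{-\beta}\,\big|\,(0,\beta);(0,1)\big]$, whose $\chi(s)=\Gamma(s)/\Gamma(\beta s)$ delivers both stated formulas directly, whereas your parameters $(1,\beta);(1,1)$ require the extra gamma recurrence to clean up---either way the argument is the same.
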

\begin{proof} From (\ref{3.1aa}), we have
\begin{eqnarray}\label{3.1a}
f_\beta (x,t)&=&\frac{1}{\beta t^{1/\beta}}\mathrm{H}^{0,1}_{1,1}\Bigg[\left(\frac{x^\beta}{t}\right)^{1/\beta}\left|
\begin{matrix}
    \left(1-1/\beta,1/\beta\right)\\ 
    (0,1)
  \end{matrix}
\right.\Bigg]\nonumber\\
&=&\frac{1}{x}\left(\frac{x^\beta}{t}\right)^{1/\beta}\mathrm{H}^{0,1}_{1,1}\Bigg[\frac{x^\beta}{t}\left|
\begin{matrix}
    \left(1-1/\beta,1\right)\\ 
    (0,\beta)
  \end{matrix}
\right.\Bigg],\ \ \ \ \mathrm{(using\ (\ref{2.9}))}\nonumber\\
&=&\frac{1}{x}\mathrm{H}^{0,1}_{1,1}\Bigg[\frac{x^\beta}{t}\left|
\begin{matrix}
    \left(1,1\right)\\ 
    (0,\beta)
  \end{matrix}
\right.\Bigg],\ \ \ \ \mathrm{(using\ (\ref{2.10}))}\nonumber\\
&=&\frac{1}{x}\mathrm{H}^{1,0}_{1,1}\Bigg[\frac{t}{x^\beta}\left|
\begin{matrix}
    (0,\beta)\\ 
    (0,1)
  \end{matrix}
\right.\Bigg],\ \ \ \ \mathrm{(using\ (\ref{2.8}))}.
\end{eqnarray}
Finally by using (\ref{2.1p}) and (\ref{2.2p}), we get
\begin{equation*}
\mathcal{M}_t\left(f_\beta (x,t)\right)=\frac{1}{x}\mathcal{M}_t\left(\mathrm{H}^{1,0}_{1,1}\Bigg[\frac{t}{x^\beta}\left|
\begin{matrix}
    (0,\beta)\\ 
    (0,1)
  \end{matrix}
\right.\Bigg]\right)=\frac{1}{x^{1-\beta s}}\frac{\Gamma(s)}{\Gamma(\beta s)}
\end{equation*}
and
\begin{equation*}
\mathcal{L}_t\left(f_\beta (x,t)\right)=\frac{1}{x}\mathcal{L}_t\left(\mathrm{H}^{1,0}_{1,1}\Bigg[\frac{t}{x^\beta}\left|
\begin{matrix}
    (0,\beta)\\ 
    (0,1)
  \end{matrix}
\right.\Bigg]\right)=x^{\beta-1}\mathrm{H}^{1,1}_{1,2}\Bigg[x^\beta s\left|
\begin{matrix}
    (0,1)\\ 
    (0,1)&(1-\beta,\beta)
  \end{matrix}
\right.\Bigg].
\end{equation*}
\end{proof}
An alternate and direct proof of Proposition \ref{t4} for the Mellin transform is given in Appendix \textbf{A.2}. Note that on following this alternate proof for the Mellin transform and using the inverse Mellin formula, we can directly obtain the density of a stable subordinator in the form (\ref{3.1a}), which is equivalent to (\ref{3.1aa}).

In the next proposition, we express the cumulative distribution function $F_\beta (x,t)$ of a stable subordinator in terms of the Fox's $H$-function. This is a straightforward application of (\ref{2.11p}). 
\begin{proposition}\label{t7}
The cumulative distribution function of a $\beta$-stable subordinator, $0<\beta<1$, is
\begin{equation}\label{cdf1}
F_\beta (x,t)=\frac{1}{\beta}\mathrm{H}^{0,1}_{1,1}\Bigg[\frac{x}{t^{1/\beta}}\left|
\begin{matrix}
    (1,1/\beta)\\ 
    (0,1)
  \end{matrix}
\right.\Bigg].
\end{equation}
\end{proposition}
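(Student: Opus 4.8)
The plan is to read (\ref{3.1aa}) as the density of an $H$-function distribution and then invoke the ready-made formula (\ref{2.11p}) for its cumulative distribution function, finishing with a one-line parameter cancellation.

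First I would match (\ref{3.1aa}) against the template $f(x)=kH(\delta x)$ preceding (\ref{2.11p}). Taking $\delta=t^{-1/\beta}$, orders $m=0$, $n=1$, $p=q=1$, and parameters $(a_1,A_1)=(1-1/\beta,1/\beta)$, $(b_1,B_1)=(0,1)$, the density (\ref{3.1aa}) is exactly $\frac{\delta}{\chi(1)}H(\delta x)$ once I check that $\chi(1)=\beta$. For these orders (\ref{1.3}) gives $\chi(s)=\Gamma(1/\beta-s/\beta)/\Gamma(1-s)$; at $s=1$ this is the indeterminate form $\Gamma(0)/\Gamma(0)$, which I would evaluate by the limit $s\to1$, using $\Gamma(\epsilon)\sim1/\epsilon$ near the pole, to obtain $\chi(1)=\beta$. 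This both confirms that (\ref{3.1aa}) is a bona fide $H$-function density and pins down the normalizing constant as $1/(\beta t^{1/\beta})$, in agreement with Theorem \ref{t3}.

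Next I would verify the proviso of (\ref{2.11p}), namely $-B_j^{-1}b_j<1$ for every $j\in\{1,\dots,m\}$; since $m=0$ this condition is vacuous, so (\ref{2.11p}) applies without restriction. Substituting the above data into (\ref{2.11p}) --- which raises the orders to $m=0$, $n+1=2$, $p+1=2$, $q+1=2$ and shifts the parameters --- yields
\[
F_\beta(x,t)=\frac{1}{\beta}\mathrm{H}^{0,2}_{2,2}\Bigg[\frac{x}{t^{1/\beta}}\left|\begin{matrix}(1,1)&(1,1/\beta)\\(1,1)&(0,1)\end{matrix}\right.\Bigg],
\]
where the new top entries are $(1,1)$ together with $(a_1+A_1,A_1)=(1,1/\beta)$, and the new bottom entries are $(b_1+B_1,B_1)=(1,1)$ together with $(0,1)$.

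Finally I would perform the reduction to $\mathrm{H}^{0,1}_{1,1}$. Writing out $\chi$ for this $\mathrm{H}^{0,2}_{2,2}$ from (\ref{1.3}), the numerator carries $\Gamma(-s)\Gamma(-s/\beta)$ and the denominator carries $\Gamma(-s)\Gamma(1-s)$, so the common factor $\Gamma(-s)$ --- coming from the repeated pair $(1,1)$ in the top and bottom lists --- cancels, leaving $\chi(s)=\Gamma(-s/\beta)/\Gamma(1-s)$. Recognizing this as the $\chi$ of $\mathrm{H}^{0,1}_{1,1}$ with top $(1,1/\beta)$ and bottom $(0,1)$ delivers (\ref{cdf1}). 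The only point requiring genuine care is the $\chi(1)$ limit, since the normalizing constant sits at a coincident pair of gamma poles; the remaining steps are parameter bookkeeping, and the order-reducing cancellation is immediate from the definition (\ref{1.3}) of $\chi$.
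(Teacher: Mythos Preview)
Your proposal is correct and follows the same route as the paper, which simply cites (\ref{2.11p}) as yielding the result. You make explicit the two details the paper leaves implicit: the limiting evaluation $\chi(1)=\beta$ (needed because the density's normalizing constant sits at a coincident pole) and the reduction from the raw $\mathrm{H}^{0,2}_{2,2}$ output of (\ref{2.11p}) to $\mathrm{H}^{0,1}_{1,1}$ via cancellation of the common $\Gamma(-s)$ factor; both are straightforward and your handling of them is accurate.
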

The next theorem gives the densities of the product, quotient, power and scalar multiple of independent stable subordinators. These are direct consequences of the results proved for the $H$-function distribution in Carter and Springer \cite{Springer542}.
\begin{theorem}
Let $D_{\beta_i}(t)$, $i=1,2,\ldots,n$, be $n$ independent $\beta_i$-stable subordinators, $0<\beta_i<1$, with corresponding density functions
\begin{equation*}
f_{\beta_i}(x_i,t)=\frac{1}{\beta_i t^{1/\beta_i}}\mathrm{H}^{0,1}_{1,1}\Bigg[\frac{x_i}{t^{1/\beta_i}}\left|
\begin{matrix}
    \left(1-1/\beta_i,1/\beta_i\right)\\ 
    (0,1)
  \end{matrix}
\right.\Bigg],\ \ \ \ x_i>0.
\end{equation*}
Then, the probability density function of\\
\noindent (i) $X(t)=aD_{\beta_i}(t),\ a>0,\ 1\leq i\leq n$, is given by
\begin{equation}\label{sym}
f(x,t)=\frac{1}{a{\beta_i} t^{1/{\beta_i}}}\mathrm{H}^{0,1}_{1,1}\Bigg[\frac{x}{at^{1/{\beta_i}}}\left|
\begin{matrix}
    \left(1-1/{\beta_i},1/{\beta_i}\right)\\ 
    (0,1)
  \end{matrix}
\right.\Bigg],\ \ x>0.
\end{equation}
\noindent (ii) $Y(t)=\prod_{i=1}^nD_{\beta_i}(t)$ is given by
\begin{equation*}
f(y,t)=\left(\prod_{i=1}^n\frac{1}{\beta_i t^{1/\beta_i}}\right)\mathrm{H}^{0,n}_{n,n}\Bigg[\frac{y}{t^{\sum_{i=1}^n1/\beta_i}}\left|
\begin{matrix}
    \left(1-1/\beta_i,1/\beta_i\right)_{1,n}\\ 
    (0,1)_{1,n}
  \end{matrix}
\right.\Bigg],\ \ y>0.
\end{equation*}
\noindent (iii) $W(t)=D_{\beta_i}^r(t),\ 1\leq i\leq n$, for $r$ rational is given by
\begin{equation*}
f(w,t)=\frac{1}{{\beta_i} t^{r/{\beta_i}}}\mathrm{H}^{0,1}_{1,1}\Bigg[\frac{w}{t^{r/{\beta_i}}}\left|
\begin{matrix}
    \left(1-r/{\beta_i},r/{\beta_i}\right)\\ 
    (1-r,r)
  \end{matrix}
\right.\Bigg],\ \  w>0,
\end{equation*}
when $r>0$ and for $r<0$
\begin{equation*}
f(w,t)=\frac{1}{{\beta_i} t^{r/{\beta_i}}}\mathrm{H}^{1,0}_{1,1}\Bigg[\frac{w}{t^{r/{\beta_i}}}\left|
\begin{matrix}
    (r,-r)\\
    (r/{\beta_i},-r/{\beta_i})
      \end{matrix}
\right.\Bigg],\ \ w>0.
\end{equation*}
\noindent (iv) $Z(t)=D_{\beta_i}(t)/D_{\beta_j}(t),\ 1\leq i,j\leq n$ and $i\neq j$, is given by
\begin{equation*}
f(z,t)=\frac{1}{\beta_i\beta_jt^{1/\beta_i+1/\beta_j}}\mathrm{H}^{1,1}_{2,2}\Bigg[\frac{z}{t^{1/\beta_i+1/\beta_j}}\left|
\begin{matrix}
    \left(1-1/\beta_i,1/\beta_i\right)&(-1,1)\\ 
    (-1/\beta_j,1/\beta_j)&(0,1)
  \end{matrix}
\right.\Bigg],\ \ z>0.
\end{equation*}
\end{theorem}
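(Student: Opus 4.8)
The plan is to exploit the fact, established in Theorem \ref{t3}, that every factor $D_{\beta_i}(t)$ has an $H$-function density, and to route all four computations through the Mellin transform, which linearizes products, powers and reciprocals of independent positive variables. The starting datum is the transform of a single factor, which by Proposition \ref{t2} (equivalently, by applying (\ref{2.1p}) to the representation (\ref{3.1aa})) is
\[
\mathcal{M}_x\!\left(f_{\beta_i}(x,t)\right)(s)=\frac{1}{\beta_i\,t^{(1-s)/\beta_i}}\,\frac{\Gamma\!\left((1-s)/\beta_i\right)}{\Gamma(1-s)},
\]
valid on the strip $\Re s<1+\beta_i$ (the apparent pole at $s=1$ cancels between numerator and denominator). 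I would record once and for all the three elementary rules for independent positive $X,W$: $\mathcal{M}(f_{XW})(s)=\mathcal{M}(f_X)(s)\,\mathcal{M}(f_W)(s)$, $\mathcal{M}(f_{X^r})(s)=\mathcal{M}(f_X)(rs-r+1)$, and $\mathcal{M}(f_{1/X})(s)=\mathcal{M}(f_X)(2-s)$. In each case the product is a ratio of products of Gamma factors, and Mellin inversion via (\ref{1.2})--(\ref{1.3}) reads this ratio off as an $H$-function; this is precisely the Carter--Springer mechanism \cite{Springer542}.

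Parts (i) and (iii) for $r>0$ are then immediate. The scalar multiple $aD_{\beta_i}(t)$ is a pure rescaling, so (\ref{sym}) follows either from $f_X(x)=a^{-1}f_D(x/a)$ or directly from (\ref{2.9})--(\ref{2.10}). For the power with $r>0$, substituting $s\mapsto rs-r+1$ gives the transform $\beta_i^{-1}t^{-r(1-s)/\beta_i}\,\Gamma(r(1-s)/\beta_i)/\Gamma(r(1-s))$, whose inversion is the stated $\mathrm{H}^{0,1}_{1,1}$ with upper parameter $(1-r/\beta_i,r/\beta_i)$ over lower parameter $(1-r,r)$; the hypothesis that $r$ be rational is what lets the Carter--Springer closure theorem keep the result inside the $H$-class, through the Gauss multiplication formula for $\Gamma$ (of which (\ref{2.7na}) is the case of two factors).

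For the product (ii) I would multiply the $n$ single-factor transforms to obtain $\bigl(\prod_i\beta_i^{-1}\bigr)\,t^{(s-1)\sum_i 1/\beta_i}\prod_i\Gamma((1-s)/\beta_i)\big/\bigl(\Gamma(1-s)\bigr)^n$; since all $n$ numerator Gammas carry their poles to the right of the contour while the $n$ copies of $\Gamma(1-s)$ sit in the denominator, inversion yields exactly $\mathrm{H}^{0,n}_{n,n}$ with upper parameters $(1-1/\beta_i,1/\beta_i)_{1,n}$ and lower parameters $(0,1)_{1,n}$, as claimed. The reciprocal cases, namely (iii) with $r<0$ and the quotient (iv), are where the bookkeeping becomes delicate: applying $s\mapsto 2-s$ produces a factor $\Gamma((s-1)/\beta_j)/\Gamma(s-1)$ in which the surviving Gamma now has its poles to the \emph{left} of the contour, so it must be assigned to the lower ($m$-type) row rather than the upper ($n$-type) row. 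For the quotient $Z=D_{\beta_i}D_{\beta_j}^{-1}$ this means one numerator Gamma lands in the $n$-block and the other in the $m$-block, producing $m=n=1$, $p=q=2$, i.e. the stated $\mathrm{H}^{1,1}_{2,2}$ with the indicated parameter rows; the power of $t$ is pinned down independently by self-similarity, since $D_\beta(t)$ equals $t^{1/\beta}D_\beta(1)$ in distribution and hence $Z(t)$ equals $t^{1/\beta_i-1/\beta_j}Z(1)$ in distribution.

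I expect the genuine difficulty to lie not in any single algebraic step but in the contour and pole accounting for the reciprocal and quotient. One must verify that the fundamental strip of each transform is nonempty (for $Z$ it is $1-\beta_j<\Re s<1+\beta_i$, which contains $s=1$), that the common strip of the product is nonempty, and that the separation of the Gamma poles onto the correct side of the contour dictates the indices $(m,n)$ and the placement of each parameter into its row exactly as stated. Once that placement is justified, each inversion is a direct reading of (\ref{1.2})--(\ref{1.3}), and the four densities follow.
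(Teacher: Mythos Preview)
Your approach is essentially the paper's: for (i) the paper also computes $\mathcal{M}_x(f)(s)=a^{s-1}\mathcal{M}_{x_i}(f_{\beta_i})(s)$, inserts Proposition~\ref{t2}, and inverts via (\ref{1.2}); for (ii)--(iv) the paper simply invokes Theorems~4.1--4.3 of Carter and Springer \cite{Springer542}, which is precisely the Mellin-product/power/quotient mechanism you spell out in detail. So on the level of strategy there is nothing new to add, and your more explicit pole-placement discussion for the reciprocal cases is a welcome elaboration of what the citation leaves implicit.

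One point to flag in (iv): your self-similarity cross-check correctly gives $Z(t)\overset{\mathcal{D}}{=}t^{1/\beta_i-1/\beta_j}Z(1)$, hence a density of the form $t^{-(1/\beta_i-1/\beta_j)}f_1\!\bigl(z\,t^{-(1/\beta_i-1/\beta_j)}\bigr)$, whereas the formula you are asked to prove carries $t^{1/\beta_i+1/\beta_j}$. Your own Mellin computation will agree with the self-similarity answer: the factor contributed by $1/D_{\beta_j}(t)$ is $\mathcal{M}(f_{\beta_j})(2-s)$, whose $t$-dependence is $t^{(1-s)/\beta_j}$, and multiplying by the $t^{(s-1)/\beta_i}$ from the numerator yields $t^{(s-1)(1/\beta_i-1/\beta_j)}$, i.e.\ the \emph{difference} of exponents after inversion. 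Thus the stated exponent $1/\beta_i+1/\beta_j$ is not what your method (or the paper's, carried out carefully) actually produces; you should reconcile this discrepancy rather than let the sanity check and the target formula stand in silent conflict.
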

\begin{proof}
(i) The density of $X$ can be obtained on using the Mellin inversion formula as follows:
\begin{eqnarray*}
f(x,t)&=&\frac{1}{2\pi i}\int_{c-i\infty}^{c+i\infty}\mathcal{M}_{x}(f(x,t))x^{-s}\,ds\\
&=&\frac{1}{2\pi i}\int_{c-i\infty}^{c+i\infty}a^{s-1}\mathcal{M}_{x_i}(f_{\beta_i}(x_i,t))x^{-s}\,ds\\
&=&\frac{1}{a\beta_i t^{1/\beta_i}}\frac{1}{2\pi i}\int_{c-i\infty}^{c+i\infty}\frac{\Gamma\left(1/\beta_i-s/\beta_i\right)}{\Gamma\left(1-s\right)}\left(\frac{x}{at^{1/\beta_i}}\right)^{-s}\,\mathrm{d}s,\ \ \ \ x>0,
\end{eqnarray*}
where the last step follows from Proposition \ref{t2}. The proof now follows on using (\ref{1.2}). Parts (ii), (iii) and (iv) follow from Theorems 4.1, 4.2 and 4.3 of Carter and Springer \cite{Springer542}, respectively.
\end{proof}
\begin{remark}
Since $M$-Wright function is a particular case of $H$-function, (\ref{symm}) reduces to
\begin{equation}\label{symmzzx}
f(x,t)=a^{\beta_i}\beta_i tx^{-(\beta_i+1)}M_{\beta_i}\left(a^{\beta_i}tx^{-\beta_i}\right),\ \ x>0.
\end{equation}
On substituting $a=1/t^{1/\beta_i}$ in (\ref{sym}) or (\ref{symmzzx}), we can obtain the self similarity property of a stable subordinator \textit{i.e.} $D_{\beta_i}(t)/t^{1/\beta_i}{\overset{\mathcal{D}}{=}}D_{\beta_i}(1)$ for all $t>0$, where ${\overset{\mathcal{D}}{=}}$ means equal in distribution.
\end{remark}
\subsection{Tempered stable subordinator}
There is a limitation to the applications of a stable subordinator because of the non-existence of its finite moments. The tempered stable subordinator is obtained by exponentially tempering the distribution of a stable subordinator (see Rosinski \cite{Jan2007}). The density of the tempered stable subordinator $D_{\lambda,\beta}(t)$, $\lambda>0$, is given by
\begin{equation*}
f_{\lambda,\beta}(x,t)=e^{-\lambda x+\lambda^\beta t}f_{\beta}(x,t),\ \ \ \ x>0,
\end{equation*}
which on using (\ref{3.1aa}) can be expressed in terms of the $H$-function as follows:
\begin{equation}\label{kk32}
f_{\lambda,\beta}(x,t)=\frac{e^{-\lambda x+\lambda^\beta t}}{\beta t^{1/\beta}}\mathrm{H}^{0,1}_{1,1}\Bigg[\frac{x}{t^{1/\beta}}\left|
\begin{matrix}
    \left(1-1/\beta,1/\beta\right)\\ 
    (0,1)
  \end{matrix}
\right.\Bigg],\ \ \ \ x>0.
\end{equation}
The tempered stable subordinator have all moments finite and is infinitely divisible with exponentially decaying tail probability but lack self similarity. Next we give a different proof of a known result using the obtained density (\ref{kk32}).
\begin{proposition}\label{p2.22}
The Laplace transform of the density of the tempered $\beta$-stable subordinator, $0<\beta<1$, is
\begin{equation*}
\mathcal{L}_x\left(f_{\lambda,\beta}(x,t)\right)=e^{-t\left((\lambda+s)^\beta-\lambda^\beta\right)}.
\end{equation*}
\end{proposition}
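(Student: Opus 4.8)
The plan is to reduce everything to the Laplace transform of the ordinary $\beta$-stable subordinator, which has already been computed in Remark \ref{t6} as $\mathcal{L}_x(f_\beta(x,t))=e^{-ts^\beta}$. The key structural observation is that tempering multiplies the density only by an exponential factor in $x$ together with a factor $e^{\lambda^\beta t}$ that is constant in $x$, so the Laplace transform should be obtainable from that of $f_\beta$ by a single shift of the transform variable.

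Concretely, I would start from the defining relation $f_{\lambda,\beta}(x,t)=e^{-\lambda x+\lambda^\beta t}f_\beta(x,t)$ and write
\begin{equation*}
\mathcal{L}_x\left(f_{\lambda,\beta}(x,t)\right)=\int_0^\infty e^{-sx}e^{-\lambda x+\lambda^\beta t}f_\beta(x,t)\,\mathrm{d}x=e^{\lambda^\beta t}\int_0^\infty e^{-(s+\lambda)x}f_\beta(x,t)\,\mathrm{d}x.
\end{equation*}
The remaining integral is precisely $\mathcal{L}_x(f_\beta(x,t))$ with the transform variable $s$ replaced by $s+\lambda$, i.e. the elementary frequency-shift property of the Laplace transform. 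Since $s>0$ and $\lambda>0$ we have $s+\lambda>0$, so this transform is well defined and the manipulation is justified by the nonnegativity of $f_\beta$.

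Invoking Remark \ref{t6}, the inner transform equals $e^{-t(s+\lambda)^\beta}$, and combining with the prefactor gives
\begin{equation*}
\mathcal{L}_x\left(f_{\lambda,\beta}(x,t)\right)=e^{\lambda^\beta t}\,e^{-t(s+\lambda)^\beta}=e^{-t\left((\lambda+s)^\beta-\lambda^\beta\right)},
\end{equation*}
as claimed. I do not expect a genuine obstacle: the whole argument rests on the shift property together with the already-established formula for $\mathcal{L}_x(f_\beta)$. If one instead insisted on working directly from the $H$-function density (\ref{kk32}), the extra factor $e^{-\lambda x}$ would block a direct appeal to the $H$-function Laplace formula (\ref{2.2p}), forcing an expansion of the exponential or a convolution-type identity, which is strictly more cumbersome; hence the shift route is the natural one.
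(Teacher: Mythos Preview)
Your proof is correct. Both you and the paper begin identically: write the Laplace integral, pull out $e^{\lambda^\beta t}$, and absorb $e^{-\lambda x}$ into the kernel to produce $\int_0^\infty e^{-(\lambda+s)x}f_\beta(x,t)\,\mathrm{d}x$. At this point the approaches diverge. You invoke Remark~\ref{t6} directly, reading off $e^{-t(\lambda+s)^\beta}$ in one stroke. The paper instead keeps the $H$-function form of $f_\beta$ and applies the $H$-function Laplace formula (\ref{2.2p}) with transform variable $\lambda+s$, obtaining an $\mathrm{H}^{2,0}_{1,2}$ which collapses (via a cancelling gamma pair) to $\mathrm{H}^{1,0}_{0,1}$, and then uses (\ref{2.7}) to recover the exponential. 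Your route is shorter and more transparent; the paper's route is deliberately chosen to illustrate that the newly derived $H$-function density (\ref{kk32}) reproduces the known Laplace transform through the $H$-function calculus, which is the thematic point of the section. One small inaccuracy in your closing remark: the factor $e^{-\lambda x}$ does \emph{not} block (\ref{2.2p}); the paper simply merges it with $e^{-sx}$ before applying that formula, exactly as you did before appealing to Remark~\ref{t6}.
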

\begin{proof}
From the definition of the Laplace transform, we have
\begin{eqnarray*}
\mathcal{L}_x\left(f_{\lambda,\beta}(x,t)\right)&=&\int_0^\infty f_{\lambda,\beta}(x,t)e^{-sx}\,\mathrm{d}x\\
&=&\frac{e^{\lambda^\beta t}}{\beta t^{1/\beta}}\int_0^\infty e^{-(\lambda+s)x}\mathrm{H}^{0,1}_{1,1}\Bigg[\frac{x}{t^{1/\beta}}\left|
\begin{matrix}
    \left(1-1/\beta,1/\beta\right)\\ 
    (0,1)
  \end{matrix}
\right.\Bigg]\,\mathrm{d}x\\
&=&\frac{e^{\lambda^\beta t}}{\beta}\mathrm{H}^{2,0}_{1,2}\Bigg[(\lambda+s)t^{1/\beta}\left|
\begin{matrix}
    (0,1)\\ 
    (0,1)&\left(0,1/\beta\right)
  \end{matrix}
\right.\Bigg],\ \ \ \ \mathrm{(using\ (\ref{2.2p}))}\\
&=&\frac{e^{\lambda^\beta t}}{\beta}\mathrm{H}^{1,0}_{0,1}\Bigg[(\lambda+s)t^{1/\beta}\left|
\begin{matrix}
    \\ 
    \left(0,1/\beta\right)
  \end{matrix}
\right.\Bigg],
\end{eqnarray*}
where the last equality is obtained on using the definition of the $H$-function. Finally, the result follows on using (\ref{2.7}).
\end{proof}
\noindent Similarly, the Laplace transform of the density of the tempered $\beta$-stable subordinator, $0<\beta<1$, with respect to the time variable is
\begin{equation*}
\mathcal{L}_t\left(f_{\lambda,\beta}(x,t)\right)=\frac{e^{-\lambda x}}{(s-\lambda^\beta)x}\mathrm{H}^{1,1}_{2,1}\Bigg[\frac{1}{x^\beta(s-\lambda^\beta)}\left|
\begin{matrix}
    (0,1)&\left(0,\beta\right)\\
    (0,1) 
    \end{matrix}
\right.\Bigg].
\end{equation*}
The proof follows on similar lines as the proof of the Proposition \ref{p2.22}.
\begin{proposition}
The Mellin transform of the density of the tempered $\beta$-stable subordinator, $0<\beta<1$, is
\begin{equation*}
\mathcal{M}_x\left(f_{\lambda,\beta}(x,t)\right)=\frac{e^{\lambda^\beta t}}{\beta t^{1/\beta}\lambda^s}\mathrm{H}^{0,2}_{2,1}\Bigg[\frac{1}{t^{1/\beta}\lambda}\left|
\begin{matrix}
    (1-s,1)&\left(1-1/\beta,1/\beta\right)\\
    (0,1) 
    \end{matrix}
\right.\Bigg].
\end{equation*}
\end{proposition}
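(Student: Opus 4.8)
The plan is to compute the Mellin transform directly from its definition $\mathcal{M}_x(f_{\lambda,\beta}(x,t))=\int_0^\infty f_{\lambda,\beta}(x,t)\,x^{s-1}\,\mathrm{d}x$ and to insert the closed form (\ref{kk32}) for the tempered density. After pulling out the constant $e^{\lambda^\beta t}/(\beta t^{1/\beta})$, the problem reduces to evaluating
\[
\int_0^\infty e^{-\lambda x}x^{s-1}\,\mathrm{H}^{0,1}_{1,1}\Bigg[\frac{x}{t^{1/\beta}}\left|
\begin{matrix}
(1-1/\beta,1/\beta)\\
(0,1)
\end{matrix}
\right.\Bigg]\,\mathrm{d}x.
\]
First I would replace the $H$-function by its Mellin--Barnes contour integral (\ref{1.2})--(\ref{1.3}); for the present parameters the integrand carries the factor $\chi(w)=\Gamma(1/\beta-w/\beta)/\Gamma(1-w)$, exactly as in the derivation of (\ref{zzz}).

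Next, I would interchange the $x$-integration with the contour integration. The inner integral then collapses to the elementary Gamma integral $\int_0^\infty e^{-\lambda x}x^{(s-w)-1}\,\mathrm{d}x=\Gamma(s-w)\,\lambda^{-(s-w)}$, valid for $\Re(s-w)>0$. Factoring out $\lambda^{-s}$ leaves the single contour integral
\[
\frac{1}{2\pi i}\int_{c-i\infty}^{c+i\infty}\frac{\Gamma(s-w)\,\Gamma(1/\beta-w/\beta)}{\Gamma(1-w)}\,(t^{1/\beta}\lambda)^{w}\,\mathrm{d}w,
\]
which is precisely the Mellin--Barnes representation of an $\mathrm{H}^{0,2}_{2,1}$-function evaluated at $1/(t^{1/\beta}\lambda)$. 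Reading off the gamma factors against (\ref{1.3}) with $m=0$, $n=2$, $p=2$, $q=1$, the numerator term $\Gamma(s-w)$ corresponds to the upper parameter $(1-s,1)$, the term $\Gamma(1/\beta-w/\beta)$ to $(1-1/\beta,1/\beta)$, and the denominator term $\Gamma(1-w)$ to the lower parameter $(0,1)$; this yields the asserted formula.

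The main obstacle is justifying the interchange of the two integrations and choosing the contour abscissa $c$ consistently. Since the $H$-function in (\ref{kk32}) has $m=0$, the integrand $\chi(w)$ has no left poles; the poles of $\Gamma(1/\beta-w/\beta)$ sit at $w=1+k\beta$ and, after the inner integration, those of $\Gamma(s-w)$ at $w=s+k$ for $k\geq 0$, all of which lie to the right of the path when $c<\min(1,\Re s)$. On this strip the condition $\Re(s-w)>0$ required for the Gamma integral holds along the entire contour, while the exponential decay of the gamma quotient combined with the factor $e^{-\lambda x}$ provides the absolute integrability needed for Fubini. I would therefore first establish absolute convergence of the double integral on $c<\min(1,\Re s)$, then apply Fubini to justify the swap, and finally match the resulting gamma factors to the $\mathrm{H}^{0,2}_{2,1}$ template to obtain the stated expression.
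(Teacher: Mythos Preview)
Your proposal is correct and follows essentially the same route as the paper: insert the $H$-function form (\ref{kk32}) into the Mellin integral, replace the $H$-function by its Mellin--Barnes representation, swap the order of integration, evaluate the inner integral as $\Gamma(s-w)\lambda^{-(s-w)}$, and read off the resulting contour integral as an $\mathrm{H}^{0,2}_{2,1}$. Your added discussion of the contour placement and the Fubini justification goes beyond what the paper makes explicit but is consistent with its argument.
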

\begin{proof}
From the definition of the Mellin transform, we have
\begin{eqnarray*}
\mathcal{M}_x\left(f_{\lambda,\beta}(x,t)\right)&=&\int_0^\infty x^{s-1}f_{\lambda,\beta}(x,t)\,\mathrm{d}x\\
&=&\frac{e^{\lambda^\beta t}}{\beta t^{1/\beta}}\int_0^\infty x^{s-1}e^{-\lambda x}\mathrm{H}^{0,1}_{1,1}\Bigg[\frac{x}{t^{1/\beta}}\left|
\begin{matrix}
    \left(1-1/\beta,1/\beta\right)\\ 
    (0,1)
  \end{matrix}
\right.\Bigg]\,\mathrm{d}x\\
&=&\frac{e^{\lambda^\beta t}}{\beta t^{1/\beta}}\frac{1}{2\pi i}\int_{c-i\infty}^{c+i\infty}\frac{\Gamma\left(1/\beta-w/\beta\right)}{\Gamma\left(1-w\right)t^{-w/\beta}}\int_0^\infty x^{s-w-1}e^{-\lambda x}\,\mathrm{d}x\,\mathrm{d}w\\
&=&\frac{e^{\lambda^\beta t}}{\beta t^{1/\beta}\lambda^s}\frac{1}{2\pi i}\int_{c-i\infty}^{c+i\infty}\frac{\Gamma\left(1/\beta-w/\beta\right)\Gamma\left(s-w\right)}{\Gamma\left(1-w\right)}\left(\frac{1}{t^{1/\beta}\lambda}\right)^{-w}\,\mathrm{d}w.
\end{eqnarray*}
The result now follows on using the definition of the $H$-function.
\end{proof}

\section{Independent inverse stable subordinators}
\setcounter{equation}{0}
The first-hitting time of a $\beta$-stable subordinator $E_\beta(t)$, also called inverse $\beta$-stable subordinator, is defined by
\begin{equation*}
E_\beta(t)=\inf\{x>0:D_\beta(x)>t\}.
\end{equation*}
The following relationship holds:
\begin{equation*}
\{E_\beta(t)\leq x\}=\{D_\beta(x)\geq t\}.
\end{equation*}
The integral representation for the density $g_\beta (x,t)$ of the inverse stable subordinator can be obtained on substituting $\lambda=0$ in Theorem 2.1 of Kumar and Vellaisamy \cite{Arun2015}.
\begin{proposition}\label{t11}
The density of the inverse $\beta$-stable subordinator, $0<\beta<1$, admits the following integral form
\begin{equation*}
g_\beta (x,t)=\frac{1}{\pi}\int_0^{\infty}u^{\beta-1}e^{-tu-xu^\beta\cos\beta\pi}\sin(\beta\pi-xu^\beta\sin\beta\pi)\,\mathrm{d}u,\ \ \ \ x>0.
\end{equation*}
\end{proposition}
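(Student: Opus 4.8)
The plan is to recover $g_\beta(x,t)$ by inverting its Laplace transform in the time variable $t$, mirroring the contour argument used for Proposition \ref{t1}. First I would record the transform itself: starting from the defining relation $\{E_\beta(t)\le x\}=\{D_\beta(x)\ge t\}$ one writes $\mathbb{P}(E_\beta(t)\le x)=\int_t^\infty f_\beta(y,x)\,\mathrm{d}y$, so that $g_\beta(x,t)=\partial_x\int_t^\infty f_\beta(y,x)\,\mathrm{d}y$. Taking the $t$-Laplace transform, interchanging the order of integration, and using $\int_0^\infty e^{-sy}f_\beta(y,x)\,\mathrm{d}y=e^{-xs^\beta}$ (Remark \ref{t6}), a short computation gives
\begin{equation*}
\int_0^\infty e^{-st}g_\beta(x,t)\,\mathrm{d}t=\partial_x\,\frac{1-e^{-xs^\beta}}{s}=s^{\beta-1}e^{-xs^\beta}.
\end{equation*}
This is exactly the $\lambda=0$ case of the transform underlying Theorem 2.1 of \cite{Arun2015}, where the tempering parameter enters through $(s+\lambda)^\beta-\lambda^\beta$ in place of $s^\beta$.

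Next I would apply the Bromwich inversion formula $g_\beta(x,t)=\frac{1}{2\pi i}\int_{c-i\infty}^{c+i\infty}e^{st}s^{\beta-1}e^{-xs^\beta}\,\mathrm{d}s$ and deform the vertical line into a Hankel contour wrapping the negative real axis, the branch cut of $s^\beta$ for $0<\beta<1$ with branch point at the origin (the contour of Figure 1 in \cite{Arun2015}). On the two edges I set $s=ue^{\pm i\pi}$, $u>0$, so that $s^{\beta-1}=u^{\beta-1}e^{\pm i(\beta-1)\pi}$ and $e^{-xs^\beta}=e^{-xu^\beta\cos\beta\pi}e^{\mp i xu^\beta\sin\beta\pi}$. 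Using $e^{\pm i(\beta-1)\pi}=-e^{\pm i\beta\pi}$ the two edge contributions combine into a difference of complex-conjugate exponentials, which collapses by Euler's formula to $2i\,e^{-xu^\beta\cos\beta\pi}\sin(\beta\pi-xu^\beta\sin\beta\pi)$. The factor $2i$ cancels the $1/(2\pi i)$ up to the $1/\pi$, and the surviving $e^{-tu}u^{\beta-1}$ from $e^{st}s^{\beta-1}$ on the cut produces precisely the stated integrand.

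The hard part is justifying the contour deformation rather than the algebra. I would need to check that the two large circular arcs contribute nothing as their radius $R\to\infty$: here $e^{st}$ decays like $e^{Rt\cos\theta}$ in the left half-plane, while $e^{-xs^\beta}$ can \emph{grow} like $e^{xR^\beta|\cos\beta\pi|}$ on the part of the arc where $\cos(\beta\arg s)<0$; since $\beta<1$ the exponential decay in $R$ dominates the sub-exponential growth in $R^\beta$, and a Jordan-type estimate (aided by the decaying factor $s^{\beta-1}$) handles the neighbourhood of the imaginary axis. One also checks that the small circle about the origin is negligible, which holds because $\beta-1>-1$ makes $s^{\beta-1}$ integrable there. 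All of these estimates are carried out once, with the tempering parameter present, in \cite[Theorem 2.1]{Arun2015}; substituting $\lambda=0$ simultaneously specializes the transform to $s^{\beta-1}e^{-xs^\beta}$ and the resulting integral to the displayed formula, which is why the statement follows immediately from that reference.
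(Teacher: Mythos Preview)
Your proposal is correct and follows essentially the same route as the paper: the paper simply states that the formula is obtained by substituting $\lambda=0$ in Theorem~2.1 of \cite{Arun2015}, and your argument spells out exactly that computation---the $t$-Laplace transform $s^{\beta-1}e^{-xs^\beta}$ (which is the $\lambda=0$ specialization of the transform in \cite{Arun2015}) followed by the Hankel-contour Bromwich inversion with the same branch-cut parametrization and arc estimates. There is nothing to add.
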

\noindent Next result give the Mellin and Laplace transforms of the density of the inverse stable subordinator.
\begin{proposition}\label{t12}
The Mellin and Laplace transforms of the density of the inverse $\beta$-stable subordinator, $0<\beta<1/2$, are
\begin{equation*}
\mathcal{M}_x\left(g_\beta (x,t)\right)=\frac{1}{t^{(1-s)\beta}}\frac{\Gamma(s)}{\Gamma(1-\beta+\beta s)}
\end{equation*}
and
\begin{equation*}
\mathcal{L}_x\left(g_\beta (x,t)\right)=t^{\beta}\mathrm{H}^{1,1}_{1,2}\Bigg[t^\beta s\left|
\begin{matrix}
    (0,1)\\ 
    (0,1)&(0,\beta)
  \end{matrix}
\right.\Bigg],
\end{equation*}
respectively.
\end{proposition}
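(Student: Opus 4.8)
The plan is to obtain the Mellin transform directly from the integral representation in Proposition~\ref{t11}, mirroring the computation in Appendix~\textbf{A.1} (the proof of Proposition~\ref{t2}), and then to read off the Laplace transform by inverting the Mellin transform into a single Fox function and invoking (\ref{2.2p}). First I would substitute the integral of Proposition~\ref{t11} into $\mathcal{M}_x(g_\beta(x,t))=\int_0^\infty x^{s-1}g_\beta(x,t)\,\mathrm{d}x$ and interchange the order of integration. The inner integral in $x$ is the crux: writing $\sin(\beta\pi-xu^\beta\sin\beta\pi)=\Im\big(e^{i\beta\pi}e^{-ixu^\beta\sin\beta\pi}\big)$ and combining with the factor $e^{-xu^\beta\cos\beta\pi}$ produces the single complex exponential $e^{-xu^\beta e^{i\beta\pi}}$, so the $x$-integral reduces to $\Im\big(e^{i\beta\pi}\Gamma(s)(u^\beta e^{i\beta\pi})^{-s}\big)=\Gamma(s)\,u^{-\beta s}\sin(\beta\pi(1-s))$. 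This Gamma integral converges exactly when $\Re(u^\beta e^{i\beta\pi})=u^\beta\cos\beta\pi>0$, i.e. when $\cos\beta\pi>0$, which is precisely the source of the hypothesis $0<\beta<1/2$.

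Next I would evaluate the remaining elementary integral $\int_0^\infty u^{\beta-1-\beta s}e^{-tu}\,\mathrm{d}u=\Gamma(\beta(1-s))\,t^{-\beta(1-s)}$, leaving $\mathcal{M}_x(g_\beta(x,t))=\pi^{-1}\Gamma(s)\,\Gamma(\beta(1-s))\sin(\beta\pi(1-s))\,t^{-(1-s)\beta}$. Applying Euler's reflection formula (\ref{2.7sd}) to the pair $\Gamma(\beta(1-s))\sin(\beta\pi(1-s))$ collapses it to $\pi/\Gamma(1-\beta+\beta s)$, and the stated Mellin transform follows at once.

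For the Laplace transform I would first use the Mellin inversion formula together with the definition (\ref{1.2})--(\ref{1.3}) to recognize the above as the Mellin transform of a single Fox function, namely $g_\beta(x,t)=t^{-\beta}\,\mathrm{H}^{1,0}_{1,1}\big[x\,t^{-\beta}\,\big|\,(1-\beta,\beta);(0,1)\big]$. Applying the Laplace identity (\ref{2.2p}) to this bare $\mathrm{H}^{1,0}_{1,1}$ with $\delta=t^{-\beta}$, and tracking the index change $\mathrm{H}^{m,n}_{p,q}\mapsto\mathrm{H}^{n+1,m}_{q,p+1}$ together with the parameter maps $(a_i,A_i)\mapsto(1-a_i-A_i,A_i)$ and $(b_j,B_j)\mapsto(1-b_j-B_j,B_j)$, turns the lower pair $(0,1)$ and upper pair $(1-\beta,\beta)$ into the lower pairs $(0,1),(0,\beta)$ and upper pair $(0,1)$, so that $\mathcal{L}_x(g_\beta(x,t))$ emerges as an $\mathrm{H}^{1,1}_{1,2}$ in the variable $t^\beta s$ once the power of $t$ produced by $\delta^{-1}$ in (\ref{2.2p}) is combined with the $t^{-\beta}$ prefactor of the density.

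The main obstacle will be analytic bookkeeping rather than any single hard idea: justifying the Fubini interchange and selecting the contour abscissa $c\in(0,1)$ so that the $x$-integral (needing $\Re s>0$ and $\cos\beta\pi>0$) and the $u$-integral (needing $\Re(\beta(1-s))>0$) converge simultaneously, and then matching the shifted parameter lists of (\ref{2.2p}) without sign or index slips.
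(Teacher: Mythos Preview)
Your proposal is correct and follows essentially the same route as the paper's proof in Appendix~\textbf{A.3}: rewrite the sine as the imaginary part of a complex exponential, evaluate the resulting $x$-integral as a Gamma function, then the $u$-integral as another Gamma function, and simplify via Euler's reflection~(\ref{2.7sd}); for the Laplace transform the paper likewise just says ``use~(\ref{2.2p}),'' which is exactly your plan after passing through the $\mathrm{H}^{1,0}_{1,1}$ form. Your explicit identification of the condition $\cos\beta\pi>0$ as the reason for the restriction $0<\beta<1/2$ is a nice addition that the paper leaves implicit.
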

\begin{proof}
See Appendix \textbf{A.3}.
\end{proof}
On using the Mellin inversion formula and Proposition \ref{t12}, the density of the inverse stable subordinator can be represented in terms of the Fox's $H$-function as follows:
\begin{eqnarray}\label{ddd}
g_\beta (x,t)&=&\frac{1}{2\pi i}\int_{c-i\infty}^{c+i\infty}\mathcal{M}_x\left(g_\beta (x,t)\right)x^{-s}\,\mathrm{d}s\nonumber\\
&=&\frac{1}{t^{\beta}}\frac{1}{2\pi i}\int_{c-i\infty}^{c+i\infty}\frac{\Gamma(s)}{\Gamma(1-\beta+\beta s)}\left(\frac{x}{t^{\beta}}\right)^{-s}\,\mathrm{d}s\nonumber\\
&=&\frac{1}{t^{\beta}}\mathrm{H}^{1,0}_{1,1}\Bigg[\frac{x}{t^{\beta}}\left|
\begin{matrix}
    \left(1-\beta,\beta\right)\\ 
    (0,1)
  \end{matrix}
\right.\Bigg],\ \ \ \ x>0,\ \ 0<\beta<1/2.
\end{eqnarray}
The density of $E_\beta(t)$, $0<\beta<1$, can also be expressed in terms of the $M$-Wright function (see \cite[Eq. (5.7)]{Gorenflo2015}). Therefore,
\begin{eqnarray*}
g_\beta (x,t)&=&t^{-\beta}M_\beta(xt^{-\beta}),\ \ x>0\\
&=&t^{-\beta}{}_{0}\Psi_1\Bigg[-xt^{-\beta}\left|
\begin{matrix}
    \\ 
    (1-\beta,-\beta)
  \end{matrix}
\right.\Bigg]\\
&=&t^{-\beta}\frac{1}{2\pi i}\int_{c-i\infty}^{c+i\infty}\frac{\Gamma\left(s\right)}{\Gamma\left(1-\beta+\beta s\right)}\left(xt^{-\beta}\right)^{-s}\,\mathrm{d}s\\
&=&\frac{1}{t^{\beta}}\mathrm{H}^{1,0}_{1,1}\Bigg[\frac{x}{t^{\beta}}\left|
\begin{matrix}
    \left(1-\beta,\beta\right)\\ 
    (0,1)
  \end{matrix}
\right.\Bigg].
\end{eqnarray*}
Alternatively, the density of $E_\beta(t)$, $0<\beta< 1$, can be obtained as follows:
\begin{eqnarray*}
\mathbb{P}(E_\beta(t)\leq x)&=&1-\mathbb{P}(D_\beta(x)< t)\\
&=&1-\frac{1}{\beta}\mathrm{H}^{0,1}_{1,1}\Bigg[\frac{t}{x^{1/\beta}}\left|
\begin{matrix}
    (1,1/\beta)\\ 
    (0,1)
  \end{matrix}
\right.\Bigg],\ \ \ \ \mathrm{(using\ (\ref{cdf1}))}\\
&=&1-\mathrm{H}^{0,1}_{1,1}\Bigg[\frac{t^\beta}{x}\left|
\begin{matrix}
    (1,1)\\ 
    (0,\beta)
  \end{matrix}
\right.\Bigg],\ \ \ \ \mathrm{(using\ (\ref{2.9}))}\\
&=&1-\mathrm{H}^{1,0}_{1,1}\Bigg[\frac{x}{t^{\beta}}\left|
\begin{matrix}
    (1,\beta)\\ 
    (0,1)
  \end{matrix}
\right.\Bigg],\ \ \ \ \mathrm{(using\ (\ref{2.8}))}.
\end{eqnarray*}
Now differentiating with respect to $x$ both sides, we get
\begin{eqnarray*}
g_\beta (x,t)&=&\frac{1}{x}\mathrm{H}^{1,0}_{1,1}\Bigg[\frac{x}{t^{\beta}}\left|
\begin{matrix}
    (1,\beta)\\ 
    (1,1)
  \end{matrix}
\right.\Bigg],\ \ \ \ \mathrm{(using\ (\ref{2.11}))}\\
&=&\frac{1}{t^{\beta}}\mathrm{H}^{1,0}_{1,1}\Bigg[\frac{x}{t^{\beta}}\left|
\begin{matrix}
    \left(1-\beta,\beta\right)\\ 
    (0,1)
  \end{matrix}
\right.\Bigg],\ \ \ \ \mathrm{(using\ (\ref{2.10}))}.
\end{eqnarray*}
From above discussion it is clear that (\ref{ddd}) can be extended for index $\beta\in(0,1)$.
\begin{theorem}\label{t13}
The density of the inverse $\beta$-stable subordinator $E_\beta(t)$, $0<\beta<1$, in terms of the $H$-function is
\begin{equation}\label{4.1aa}
g_\beta (x,t)=\frac{1}{t^{\beta}}\mathrm{H}^{1,0}_{1,1}\Bigg[\frac{x}{t^{\beta}}\left|
\begin{matrix}
    \left(1-\beta,\beta\right)\\ 
    (0,1)
  \end{matrix}
\right.\Bigg],\ \ \ \ x>0.
\end{equation}
\end{theorem}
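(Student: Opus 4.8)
The plan is to recognise $g_\beta(x,t)$ as the inverse Mellin transform of the expression supplied by Proposition \ref{t12}, to match the resulting Mellin--Barnes kernel against the definition of the $H$-function, and then to upgrade the range of validity from $0<\beta<1/2$ to the full interval $0<\beta<1$.

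First, for $0<\beta<1/2$, Proposition \ref{t12} gives
\[
\mathcal{M}_x\left(g_\beta(x,t)\right)=\frac{1}{t^{(1-s)\beta}}\frac{\Gamma(s)}{\Gamma(1-\beta+\beta s)}.
\]
Writing $t^{-(1-s)\beta}=t^{-\beta}\left(t^{-\beta}\right)^{-s}$ and applying the Mellin inversion formula, I would obtain
\[
g_\beta(x,t)=\frac{1}{t^{\beta}}\frac{1}{2\pi i}\int_{c-i\infty}^{c+i\infty}\frac{\Gamma(s)}{\Gamma(1-\beta+\beta s)}\left(\frac{x}{t^{\beta}}\right)^{-s}ds,
\]
which is exactly (\ref{ddd}). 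Comparing the integrand with the kernel $\chi(s)$ in (\ref{1.3}), I read off $m=1,\,n=0,\,p=1,\,q=1$: the numerator factor $\Gamma(s)$ is $\Gamma(b_1+B_1 s)$ with $(b_1,B_1)=(0,1)$, while the denominator factor $\Gamma(1-\beta+\beta s)$ is $\Gamma(a_1+A_1 s)$ with $(a_1,A_1)=(1-\beta,\beta)$. By the definition (\ref{1.2}) this is precisely $t^{-\beta}\mathrm{H}^{1,0}_{1,1}$ with the parameters displayed in (\ref{4.1aa}).

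The main obstacle is that Proposition \ref{t12} is stated only for $0<\beta<1/2$, so Mellin inversion alone does not cover the full range. To remove this restriction I would invoke the independent $M$-Wright representation $g_\beta(x,t)=t^{-\beta}M_\beta\!\left(xt^{-\beta}\right)$, valid for all $0<\beta<1$ (see \cite[Eq. (5.7)]{Gorenflo2015}). Rewriting $M_\beta$ as the generalised Wright function ${}_{0}\Psi_1$ and inserting its Mellin--Barnes integral (\ref{kpp}), the identical kernel $\Gamma(s)/\Gamma(1-\beta+\beta s)$ reappears, so the $H$-function definition again yields (\ref{4.1aa}), now for every $\beta\in(0,1)$. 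As an independent cross-check I would also derive the density from the distribution function: the relation $\{E_\beta(t)\le x\}=\{D_\beta(x)\ge t\}$ combined with the cumulative distribution function (\ref{cdf1}) gives $\mathbb{P}(E_\beta(t)\le x)=1-\mathrm{H}^{1,0}_{1,1}$ with argument $x/t^{\beta}$ after applying the reflection and scaling identities (\ref{2.8})--(\ref{2.9}); differentiating in $x$ via the derivative formula (\ref{2.11}) and simplifying by (\ref{2.10}) returns (\ref{4.1aa}) once more. I expect the only real work to be the parameter bookkeeping that confirms all three routes converge to the same $\mathrm{H}^{1,0}_{1,1}$ data; the underlying analytic points (convergence of the contour integrals and the choice of $c$ so that the poles of $\Gamma(s)$ lie to the left of the path) are routine given the hypotheses already in force.
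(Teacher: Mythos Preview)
Your proposal is correct and follows essentially the same route as the paper: Mellin inversion of Proposition~\ref{t12} yields (\ref{ddd}) for $0<\beta<1/2$, the $M$-Wright representation from \cite[Eq.~(5.7)]{Gorenflo2015} together with (\ref{kpp}) extends this to all $\beta\in(0,1)$, and the CDF route via (\ref{cdf1}), (\ref{2.8})--(\ref{2.10}), and (\ref{2.11}) serves as the same independent derivation the paper gives. The parameter identification and bookkeeping you describe match the paper's argument line by line.
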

\begin{proposition}\label{t14}
The Mellin transform of the density of the inverse $\beta$-stable subordinator, $0<\beta<1$, with respect to the time variable is
\begin{equation*}
\mathcal{M}_t\left(g_\beta (x,t)\right)=\frac{1}{\beta x^{1-s/\beta}}\frac{\Gamma\left(1-s/\beta\right)}{\Gamma(1-s)}.
\end{equation*}
\end{proposition}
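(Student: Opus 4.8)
The plan is to mirror the proof of Proposition \ref{t4}. Starting from the closed form (\ref{4.1aa}), I would first recast the density $g_\beta(x,t)$ so that the time variable $t$ alone appears as the (suitably scaled) argument of a single $H$-function, with a prefactor depending only on $x$; the Mellin transform in $t$ can then be read off directly from the $H$-function distribution formula (\ref{2.1p}).

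To carry this out, begin with
\[
g_\beta(x,t)=\frac{1}{t^{\beta}}\mathrm{H}^{1,0}_{1,1}\Bigg[\frac{x}{t^{\beta}}\left|\begin{matrix}(1-\beta,\beta)\\(0,1)\end{matrix}\right.\Bigg].
\]
Writing $x/t^{\beta}=(x^{1/\beta}/t)^{\beta}$ and applying the power rule (\ref{2.9}) with $\sigma=\beta$ converts the argument to $x^{1/\beta}/t$ and turns the parameter pairs into $(1-\beta,1)$ and $(0,1/\beta)$ (at the cost of a factor $1/\beta$). The reflection rule (\ref{2.8}) then inverts the argument to $t/x^{1/\beta}$, yielding an $\mathrm{H}^{0,1}_{1,1}$ with top pair $(1,1/\beta)$ and bottom pair $(\beta,1)$. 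Finally, since $(t/x^{1/\beta})^{-\beta}=x^{-1}t^{-\beta}$, the leftover factor $t^{-\beta}$ equals $x^{-1}$ times a power of the current argument, so the shift rule (\ref{2.10}) with $\rho=-\beta$ absorbs it into the $H$-function, giving the compact form
\[
g_\beta(x,t)=\frac{1}{\beta x}\mathrm{H}^{0,1}_{1,1}\Bigg[\frac{t}{x^{1/\beta}}\left|\begin{matrix}(0,1/\beta)\\(0,1)\end{matrix}\right.\Bigg].
\]

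With $t$ now isolated as the argument, I would invoke (\ref{2.1p}) with scale $\delta=x^{-1/\beta}$. For the parameters $(a_1,A_1)=(0,1/\beta)$ and $(b_1,B_1)=(0,1)$ of this $\mathrm{H}^{0,1}_{1,1}$, the defining quotient (\ref{1.3}) gives $\chi(s)=\Gamma(1-s/\beta)/\Gamma(1-s)$, so that $\mathcal{M}_t(g_\beta(x,t))=(\beta x)^{-1}\chi(s)\delta^{-s}=(\beta x)^{-1}x^{s/\beta}\Gamma(1-s/\beta)/\Gamma(1-s)$, which is precisely the claimed expression. I expect no genuine difficulty beyond careful bookkeeping of the $H$-function parameters across the three transformations; the one point to watch is matching the power of $x$ produced by $\delta^{-s}$ against the $x^{-1}$ prefactor. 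As an independent check (and a self-contained alternative), one may substitute $r=xt^{-\beta}$ directly in $\mathcal{M}_t(g_\beta(x,t))=\int_0^\infty t^{s-1}t^{-\beta}M_\beta(xt^{-\beta})\,\mathrm{d}t$, which reduces the integral to $\beta^{-1}x^{s/\beta-1}$ times the Mellin transform of $M_\beta$ evaluated at $1-s/\beta$; the latter equals $\Gamma(1-s/\beta)/\Gamma(1-s)$ and reproduces the same answer.
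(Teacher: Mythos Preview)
Your approach is correct and essentially the same as the paper's: both manipulate (\ref{4.1aa}) via (\ref{2.9}), (\ref{2.10}), (\ref{2.8}) (you merely swap the order of the last two) to reach $g_\beta(x,t)=(\beta x)^{-1}\mathrm{H}^{0,1}_{1,1}\bigl[t\,x^{-1/\beta}\,\big|\,(0,1/\beta);(0,1)\bigr]$ and then read off the Mellin transform in $t$ from (\ref{2.1p}). One minor slip to fix: $(t/x^{1/\beta})^{-\beta}=x\,t^{-\beta}$, not $x^{-1}t^{-\beta}$; this is precisely what gives $t^{-\beta}=x^{-1}(t/x^{1/\beta})^{-\beta}$, so your use of (\ref{2.10}) with $\rho=-\beta$ and the resulting $1/(\beta x)$ prefactor are nevertheless correct.
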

\begin{proof}
From (\ref{4.1aa}), we have
\begin{eqnarray}\label{4.2aa}
g_\beta (x,t)&=&\frac{1}{t^{\beta}}\mathrm{H}^{1,0}_{1,1}\Bigg[\frac{x}{t^{\beta}}\left|
\begin{matrix}
    \left(1-\beta,\beta\right)\\ 
    (0,1)
  \end{matrix}
\right.\Bigg]\nonumber\\
&=&\frac{1}{\beta t^{\beta}}\mathrm{H}^{1,0}_{1,1}\Bigg[\frac{x^{1/\beta}}{t}\left|
\begin{matrix}
    \left(1-\beta,1\right)\\ 
    (0,1/\beta)
  \end{matrix}
\right.\Bigg],\ \ \ \ \mathrm{(using\ (\ref{2.9}))}\nonumber\\
&=&\frac{1}{\beta x}\mathrm{H}^{1,0}_{1,1}\Bigg[\frac{x^{1/\beta}}{t}\left|
\begin{matrix}
    \left(1,1\right)\\ 
    (1,1/\beta)
  \end{matrix}
\right.\Bigg],\ \ \ \ \mathrm{(using\ (\ref{2.10}))}\nonumber\\
&=&\frac{1}{\beta x}\mathrm{H}^{0,1}_{1,1}\Bigg[\frac{t}{x^{1/\beta}}\left|
\begin{matrix}
    (0,1/\beta)\\ 
    (0,1)
  \end{matrix}
\right.\Bigg],
\end{eqnarray}
where the last step follows from (\ref{2.8}). The proof is complete on using (\ref{2.1p}).
\end{proof}
An alternate and direct proof of Proposition \ref{t14} is given in Appendix \textbf{A.4}. We note that on following this alternate proof for the Mellin transform and using the inverse Mellin formula, we can directly obtain the density of the inverse stable subordinator in the form (\ref{4.2aa}), which is equivalent to (\ref{4.1aa}).
\begin{remark}\label{t16}
On using (\ref{2.2p}), (\ref{2.7}) and (\ref{4.2aa}), we obtain another well known result
\begin{equation*}
\mathcal{L}_t\left(g_\beta (x,t)\right)=\frac{1}{\beta}x^{1/\beta-1}\mathrm{H}^{1,0}_{0,1}\Bigg[x^{1/\beta} s\left|
\begin{matrix}
    \\ 
    (1-1/\beta,1/\beta)
  \end{matrix}
\right.\Bigg]=s^{\beta-1}e^{-xs^\beta}.
\end{equation*}
\end{remark}
Next, the cumulative distribution function $G_\beta (x,t)$ of the inverse stable subordinator in terms of the Fox's $H$-function is obtained on using (\ref{2.11p}) and (\ref{4.1aa}).
\begin{proposition}
The cumulative distribution function of the inverse $\beta$-stable subordinator, $0<\beta<1$, is
\begin{equation*}
G_\beta (x,t)=\mathrm{H}^{1,1}_{2,2}\Bigg[\frac{x}{t^{\beta}}\left|
\begin{matrix}
    (1,1)&(1,\beta)\\ 
    (1,1)&(0,1)
  \end{matrix}
\right.\Bigg].
\end{equation*}
\end{proposition}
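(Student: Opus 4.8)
The plan is to recognize the density $g_\beta(x,t)$ from (\ref{4.1aa}) as a normalized $H$-function distribution and then invoke the general cumulative distribution formula (\ref{2.11p}) directly. First I would match
\[
g_\beta (x,t)=\frac{1}{t^{\beta}}\mathrm{H}^{1,0}_{1,1}\Bigg[\frac{x}{t^{\beta}}\left|
\begin{matrix}
    \left(1-\beta,\beta\right)\\
    (0,1)
  \end{matrix}
\right.\Bigg]
\]
against the $H$-distribution density $f(x)=(\delta/\chi(1))\,\mathrm{H}^{m,n}_{p,q}[\delta x\mid\cdots]$, reading off $\delta=t^{-\beta}$, $m=1$, $n=0$, $p=q=1$, together with the parameter pairs $(a_1,A_1)=(1-\beta,\beta)$ and $(b_1,B_1)=(0,1)$.

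The essential consistency check is that the prefactor $t^{-\beta}$ really equals $\delta/\chi(1)$, i.e.\ that $\chi(1)=1$. From the definition (\ref{1.3}) with these indices the numerator contributes $\Gamma(b_1+B_1s)=\Gamma(s)$ and the denominator contributes $\Gamma(a_1+A_1s)=\Gamma(1-\beta+\beta s)$, so $\chi(s)=\Gamma(s)/\Gamma(1-\beta+\beta s)$ and hence $\chi(1)=\Gamma(1)/\Gamma(1)=1$. This confirms that $g_\beta$ is already normalized, so no stray constant will survive. I would also verify the side condition $-B_j^{-1}b_j<1$ demanded by (\ref{2.11p}): the only relevant lower index is $(b_1,B_1)=(0,1)$, giving $-b_1/B_1=0<1$, so the formula is applicable.

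Finally I would substitute these data into (\ref{2.11p}). The order $(m,\,n+1;\,p+1,\,q+1)$ becomes $(1,1;2,2)$; the new top row is $(1,1)$ adjoined to $(a_1+A_1,A_1)=(1,\beta)$, and the new bottom row is $(b_1+B_1,B_1)=(1,1)$ adjoined to $(0,1)$, with leading constant $1/\chi(1)=1$. This yields precisely the stated $\mathrm{H}^{1,1}_{2,2}$ expression for $G_\beta(x,t)$. There is no genuine obstacle here; the only point demanding care is the bookkeeping of which parameter pairs get shifted by their $A_i$ or $B_j$ in passing from the density parameters to the distribution-function parameters, and confirming $\chi(1)=1$ so that the normalizing constant disappears.
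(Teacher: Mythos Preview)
Your proposal is correct and follows exactly the approach the paper indicates: apply the general $H$-distribution CDF formula (\ref{2.11p}) to the density (\ref{4.1aa}), after checking $\chi(1)=1$ and the side condition $-b_1/B_1=0<1$. The parameter bookkeeping you carry out matches the paper's claimed result precisely.
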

As obtained for independent stable subordinators, similar results for the product, quotient, power and scalar multiple of independent inverse stable subordinators holds.
\begin{theorem}
Let $E_{\beta_i}(t)$, $i=1,2,\ldots,n$, be $n$ independent inverse $\beta_i$-stable subordinators, $0<\beta_i<1$, with corresponding density functions
\begin{equation*}
g_{\beta_i}(x_i,t)=\frac{1}{t^{\beta_i}}\mathrm{H}^{1,0}_{1,1}\Bigg[\frac{x_i}{t^{\beta_i}}\left|
\begin{matrix}
    \left(1-\beta_i,\beta_i\right)\\ 
    (0,1)
  \end{matrix}
\right.\Bigg],\ \ \ \ x_i>0.
\end{equation*}
Then, the probability density function of\\
\noindent (i) $X(t)=aE_{\beta_i}(t),\ a>0,\ 1\leq i\leq n$, is given by
\begin{equation}\label{symm}
g(x,t)=\frac{1}{at^{\beta_i}}\mathrm{H}^{1,0}_{1,1}\Bigg[\frac{x}{at^{\beta_i}}\left|
\begin{matrix}
    \left(1-\beta_i,\beta_i\right)\\ 
    (0,1)
  \end{matrix}
\right.\Bigg],\ \ x>0.
\end{equation}
\noindent (ii) $Y(t)=\prod_{i=1}^nE_{\beta_i}(t)$ is given by
\begin{equation*}
g(y,t)=\frac{1}{t^{\sum_{i=1}^n\beta_i}}\mathrm{H}^{n,0}_{n,n}\Bigg[\frac{y}{t^{\sum_{i=1}^n\beta_i}}\left|
\begin{matrix}
    \left(1-\beta_i,\beta_i\right)_{1,n}\\ 
    (0,1)_{1,n}
  \end{matrix}
\right.\Bigg],\ \ y>0.
\end{equation*}
\noindent (iii) $W(t)=E_{\beta_i}^r(t),\ 1\leq i\leq n$, for $r$ rational is given by
\begin{equation*}
g(w,t)=\frac{1}{t^{r\beta_i}}\mathrm{H}^{1,0}_{1,1}\Bigg[\frac{w}{t^{r\beta_i}}\left|
\begin{matrix}
    \left(1-r\beta_i,r\beta_i\right)\\ 
    (1-r,r)
  \end{matrix}
\right.\Bigg],\ \  w>0,
\end{equation*}
when $r>0$ and for $r<0$
\begin{equation*}
g(w,t)=\frac{1}{t^{r\beta_i}}\mathrm{H}^{0,1}_{1,1}\Bigg[\frac{w}{t^{r\beta_i}}\left|
\begin{matrix}
    (r,-r)\\
    (r\beta_i,-r\beta_i)
      \end{matrix}
\right.\Bigg],\ \ w>0.
\end{equation*}
\noindent (iv) $Z(t)=E_{\beta_i}(t)/E_{\beta_j}(t),\ 1\leq i,j\leq n$ and $i\neq j$, is given by
\begin{equation*}
g(z,t)=\frac{1}{t^{\beta_i+\beta_j}}\mathrm{H}^{1,1}_{2,2}\Bigg[\frac{z}{t^{\beta_i+\beta_j}}\left|
\begin{matrix}
    \left(1-\beta_i,\beta_i\right)&(-1,1)\\ 
    (-\beta_j,\beta_j)&(0,1)
  \end{matrix}
\right.\Bigg],\ \ z>0.
\end{equation*}
\end{theorem}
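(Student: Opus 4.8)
The strategy is to exploit, exactly as in the proof of the corresponding theorem for stable subordinators, the single fact recorded in Theorem \ref{t13}: each $E_{\beta_i}(t)$ has an $H$-function distribution. The closure of this family under scaling, products, powers and quotients then delivers the four parts, parts (ii)--(iv) being the product, power and quotient theorems of Carter and Springer \cite{Springer542} applied to the densities (\ref{4.1aa}). Throughout I would work with the Mellin transform
\[
\mathcal{M}_x\left(g_{\beta_i}(x_i,t)\right)=\frac{1}{t^{(1-s)\beta_i}}\frac{\Gamma(s)}{\Gamma(1-\beta_i+\beta_i s)},
\]
which for every $0<\beta_i<1$ is simply $\delta^{1-s}\chi(s)$ read off from (\ref{2.1p}) applied to (\ref{4.1aa}), so no restriction to $\beta_i<1/2$ is needed. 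For part (i) I would observe that $X=aE_{\beta_i}(t)$ has density $a^{-1}g_{\beta_i}(x/a,t)$ and hence Mellin transform $a^{s-1}\mathcal{M}_x(g_{\beta_i}(x_i,t))$; substituting this into the Mellin inversion formula and comparing with (\ref{1.2}) produces (\ref{symm}), the factor $a$ merely rescaling the $H$-argument.

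For part (ii) independence gives $\mathcal{M}_y(f_Y)(s)=\prod_{i=1}^{n}\mathcal{M}_x(g_{\beta_i})(s)$, so the single ratio of gammas becomes $\prod_{i=1}^{n}\Gamma(s)/\Gamma(1-\beta_i+\beta_i s)$ and the scale accumulates to $t^{-(1-s)\sum_i\beta_i}$; Mellin inversion identifies this with the stated $\mathrm{H}^{n,0}_{n,n}$, whose $\chi(s)$ is precisely $\prod_i\Gamma(s)/\prod_i\Gamma(1-\beta_i+\beta_i s)$. For part (iii) I would use $\mathcal{M}_w(f_W)(s)=\mathbb{E}\big[E_{\beta_i}^{\,r(s-1)}\big]=\mathcal{M}_x(g_{\beta_i})(r(s-1)+1)$; the substitution $u=r(s-1)+1$ turns the two gamma factors into $\Gamma(1-r+rs)$ and $\Gamma(1-r\beta_i+r\beta_i s)$ and the scale into $t^{-r\beta_i(1-s)}$, matching the $\chi(s)$ of the stated $\mathrm{H}^{1,0}_{1,1}$ when $r>0$. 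When $r<0$ the exponents $r\beta_i,r$ are negative and violate the requirement $A_i,B_j>0$, so here I would apply the reflection (\ref{2.8}) to pass to the equivalent $\mathrm{H}^{0,1}_{1,1}$ with parameter pairs $(r,-r)$ and $(r\beta_i,-r\beta_i)$, whose second entries $-r,-r\beta_i$ are now positive; this is the only genuinely new ingredient relative to the case $r>0$.

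The quotient in part (iv) is where the bookkeeping must be done most carefully, and it is the step I expect to be the main obstacle. Writing $Z=E_{\beta_i}(t)/E_{\beta_j}(t)$ and using independence, $\mathcal{M}_z(f_Z)(s)=\mathcal{M}_x(g_{\beta_i})(s)\,\mathcal{M}_x(g_{\beta_j})(2-s)$, the argument $2-s$ entering because $\mathbb{E}\big[E_{\beta_j}^{\,1-s}\big]=\mathcal{M}_x(g_{\beta_j})(2-s)$. This yields two gamma factors in the numerator, $\Gamma(s)$ and $\Gamma(2-s)$, two in the denominator, $\Gamma(1-\beta_i+\beta_i s)$ and $\Gamma(1+\beta_j-\beta_j s)$, and a time dependence fixed by the self-similarity $E_\beta(t)\overset{\mathcal{D}}{=}t^{\beta}E_\beta(1)$. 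The delicate point is the correct sorting of the four factors into the numerator and denominator blocks of $\chi(s)$ in (\ref{1.3}): the reflection $s\mapsto 2-s$ casts $\Gamma(2-s)$ in the form $\Gamma(1-a-As)$, placing it in the leading $n$-block of the top row, whereas $\Gamma(s)=\Gamma(b+Bs)$ belongs in the leading $m$-block of the bottom row, the two denominator gammas filling the remaining top and bottom slots. This assignment is what fixes the indices $\mathrm{H}^{1,1}_{2,2}$ and the order of the parameter pairs, after which Mellin inversion returns the density. Since the numerator factor contributed by the inverse subordinator is $\Gamma(s)$---a bottom-row factor---rather than a factor carrying $(1-s)$ in its argument as in the stable quotient, I would be careful to derive this placement from (\ref{1.3}) directly instead of transcribing the parameter pattern from the stable case.
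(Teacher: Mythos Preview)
Your proposal is correct and follows the same route as the paper: for (i) you use the scaling property of the Mellin transform together with the Mellin inversion formula and the explicit transform coming from (\ref{4.1aa}), and for (ii)--(iv) you invoke the product, power and quotient theorems of Carter and Springer \cite{Springer542}, exactly as the paper does. Your write-up is in fact more detailed than the paper's (which simply cites \cite{Springer542} for (ii)--(iv)), and your remark that (\ref{2.1p}) applied to (\ref{4.1aa}) gives the Mellin transform for all $0<\beta_i<1$ is a useful sharpening, since the paper refers to Proposition \ref{t12} which is stated only for $0<\beta<1/2$.
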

\begin{proof}
(i) The density of $X$ is given by
\begin{eqnarray*}
g(x,t)&=&\frac{1}{2\pi i}\int_{c-i\infty}^{c+i\infty}\mathcal{M}_{x}(g(x,t))x^{-s}\,ds\\
&=&\frac{1}{2\pi i}\int_{c-i\infty}^{c+i\infty}a^{s-1}\mathcal{M}_{x_i}(g_{\beta_i}(x_i,t))x^{-s}\,ds\\
&=&\frac{1}{at^{\beta_i}}\frac{1}{2\pi i}\int_{c-i\infty}^{c+i\infty}\frac{\Gamma\left(s\right)}{\Gamma\left(1-\beta_i+\beta_is\right)}\left(\frac{x}{at^{\beta_i}}\right)^{-s}\,\mathrm{d}s,\ \ \ \ x>0,
\end{eqnarray*}
where the last step follows from Proposition \ref{t12}. The proof now follows on using (\ref{1.2}). Parts (ii), (iii) and (iv) follow from Theorems 4.1, 4.2 and 4.3 of Carter and Springer  \cite{Springer542}, respectively.
\end{proof}
\begin{remark}
Since $M$-Wright function is a particular case of $H$-function, (\ref{symm}) reduces to
\begin{equation}\label{symmz}
g(x,t)=\frac{1}{at^{\beta_i}}M_{\beta_i}\left(\frac{x}{at^{\beta_i}}\right),\ \ x>0.
\end{equation}
On substituting $a=1/t^{\beta_i}$ in (\ref{symm}) or (\ref{symmz}), we can obtain the self similarity property of the inverse stable subordinator \textit{i.e.} $E_{\beta_i}(t)/t^{\beta_i}{\overset{\mathcal{D}}{=}}E_{\beta_i}(1)$ for all $t>0$.
\end{remark}
\begin{remark}
\noindent From our results the Corollary 3.1 (a), (c) of \cite{Mark2004} \textit{i.e.} $E_\beta(t){\overset{\mathcal{D}}{=}}\left(D_\beta(1)/t\right)^{-\beta}$ and
\begin{equation*}
g_\beta (x,t)=\frac{t}{\beta}x^{-1-1/\beta}f_\beta \left(tx^{-1/\beta},1\right)
\end{equation*}
can easily be verified and an explicit form for $f_\beta \left(x,1\right)$ is given by (\ref{3.1aaa}).

Also, on using (\ref{2.1p}) and (\ref{4.1aa}), the $r$-th moment of the inverse stable subordinator is given by
\begin{equation*}
\mathbb{E}\left(E_\beta(t)^r\right)=C(\beta,r)t^{\beta r}=\frac{\Gamma(1+r)}{\Gamma(1+\beta r)}t^{\beta r},
\end{equation*}
which determines the positive finite constant $C(\beta,r)$ in Corollary 3.1 (b) of \cite{Mark2004}. Therefore, the mean and variance of $E_\beta(t)$ are explicitly given by
\begin{equation*}
\mathbb{E}\left(E_\beta(t)\right)=\frac{1}{\Gamma(1+\beta)}t^{\beta}
\end{equation*}
and
\begin{equation*}
\mathrm{Var}\left(E_\beta(t)\right)=\left(\frac{2}{\Gamma(1+2\beta)}-\frac{1}{\Gamma^2(1+\beta)}\right)t^{2\beta},
\end{equation*}
respectively.
\end{remark}
\vone\noindent{\bf Appendix}
\begin{proof}[A.1.]
By using the definition of the Mellin transform, we have 
\begin{eqnarray*}
\mathcal{M}_x\left(f_\beta (x,t)\right)&=&\int_{0}^{\infty}x^{s-1}f_\beta (x,t)\,\mathrm{d}x\\
&=&\frac{1}{\pi}\int_0^{\infty}\int_{0}^{\infty}x^{s-1}e^{-ux-tu^\beta\cos\beta\pi}\sin(tu^\beta\sin\beta\pi)\,\mathrm{d}u\,\mathrm{d}x\\
&=&\operatorname{Im}\frac{1}{\pi}\int_0^{\infty}\int_{0}^{\infty}x^{s-1}e^{-ux-tu^\beta\cos\beta\pi+itu^\beta\sin\beta\pi}\,\mathrm{d}u\,\mathrm{d}x\\
&=&\operatorname{Im}\frac{1}{\pi}\int_0^{\infty}e^{-tu^\beta (\cos\beta\pi-i\sin\beta\pi)}\int_{0}^{\infty}x^{s-1}e^{-ux}\,\mathrm{d}x\,\mathrm{d}u\\
&=&\operatorname{Im}\frac{\Gamma(s)}{\pi}\int_0^{\infty}u^{-s}e^{-tu^\beta e^{-i\beta\pi}}\,\mathrm{d}u\\
&=&\operatorname{Im}\frac{\Gamma(s)}{\beta\pi}\int_0^{\infty}w^{1/\beta-s/\beta-1}e^{-te^{-i\beta\pi}w}\,\mathrm{d}w,\ \ \ \ \mathrm{(}\mathrm{substituting}\ w=u^\beta\mathrm{)}\\
&=&\operatorname{Im}\frac{1}{\beta t^{(1-s)/\beta}}\frac{\Gamma\left(s\right)\Gamma\left((1-s)/\beta \right)e^{i(1-s)\pi}}{\pi}\\
&=&\frac{1}{\beta t^{(1-s)/\beta}}\frac{\Gamma\left(s\right)\Gamma\left((1-s)/\beta \right)\sin{(1-s)\pi}}{\pi}.
\end{eqnarray*}
The proof follows on using the identity (\ref{2.7sd}).
\end{proof}
\begin{proof}[A.2.]
An alternate and direct proof for the Mellin transform is
\begin{eqnarray*}
\mathcal{M}_t\left(f_\beta (x,t)\right)&=&\int_{0}^{\infty}t^{s-1}f_\beta (x,t)\,\mathrm{d}t\\
&=&\frac{1}{\pi}\int_0^{\infty}\int_{0}^{\infty}t^{s-1}e^{-ux-tu^\beta\cos\beta\pi}\sin(tu^\beta\sin\beta\pi)\,\mathrm{d}u\,\mathrm{d}t\\
&=&\operatorname{Im}\frac{1}{\pi}\int_0^{\infty}\int_{0}^{\infty}t^{s-1}e^{-ux-tu^\beta\cos\beta\pi+itu^\beta\sin\beta\pi}\,\mathrm{d}u\,\mathrm{d}t\\
&=&\operatorname{Im}\frac{1}{\pi}\int_0^{\infty}e^{-ux}\int_{0}^{\infty}t^{s-1}e^{-tu^\beta e^{-i\beta\pi}}\,\mathrm{d}t\,\mathrm{d}u\\
&=&\operatorname{Im}\frac{\Gamma(s)e^{i\beta s\pi}}{\pi}\int_0^{\infty}u^{1-\beta s-1}e^{-ux}\,\mathrm{d}u\\
&=&\frac{1}{x^{1-\beta s}}\frac{\Gamma\left(s\right)\Gamma\left(1-\beta s \right)\sin{\beta s\pi}}{\pi}.
\end{eqnarray*}
\end{proof}
\begin{proof}[A.3.]
By using the definition of the Mellin transform, we have
\begin{eqnarray*}
\mathcal{M}_x\left(g_\beta (x,t)\right)&=&\int_{0}^{\infty}x^{s-1}g_\beta (x,t)\,\mathrm{d}x\\
&=&\frac{1}{\pi}\int_0^{\infty}\int_{0}^{\infty}x^{s-1}u^{\beta-1}e^{-tu-xu^\beta\cos\beta\pi}\sin(\beta\pi-xu^\beta\sin\beta\pi)\,\mathrm{d}u\,\mathrm{d}x\\
&=&\operatorname{Im}\frac{1}{\pi}\int_0^{\infty}\int_{0}^{\infty}x^{s-1}u^{\beta-1}e^{-tu-xu^\beta\cos\beta\pi+i\beta\pi-ixu^\beta\sin\beta\pi}\,\mathrm{d}u\,\mathrm{d}x\\
&=&\operatorname{Im}\frac{e^{i\beta\pi}}{\pi}\int_0^{\infty}u^{\beta-1}e^{-tu}\int_{0}^{\infty}x^{s-1}e^{-u^\beta e^{i\beta\pi}x}\,\mathrm{d}x\,\mathrm{d}u\\
&=&\operatorname{Im}\frac{\Gamma(s)e^{i(1-s)\beta\pi}}{\pi}\int_0^{\infty}u^{(1-s)\beta-1}e^{-tu}\,\mathrm{d}u\\
&=&\operatorname{Im}\frac{1}{t^{(1-s)\beta}}\frac{\Gamma\left(s\right)\Gamma\left((1-s)\beta \right)e^{i(1-s)\beta\pi}}{\pi}\\
&=&\frac{1}{t^{(1-s)\beta}}\frac{\Gamma\left(s\right)\Gamma\left((1-s)\beta \right)\sin{(1-s)\beta\pi}}{\pi}\\
\end{eqnarray*}
The proof follows on using the identity (\ref{2.7sd}). For Laplace transform use (\ref{2.2p}).
\end{proof}
\begin{proof}[A.4.]
An alternate and direct proof is
\begin{eqnarray*}
\mathcal{M}_t\left(g_\beta (x,t)\right)&=&\int_{0}^{\infty}t^{s-1}g_\beta (x,t)\,\mathrm{d}t\\
&=&\frac{1}{\pi}\int_0^{\infty}\int_{0}^{\infty}t^{s-1}u^{\beta-1}e^{-tu-xu^\beta\cos\beta\pi}\sin(\beta\pi-xu^\beta\sin\beta\pi)\,\mathrm{d}u\,\mathrm{d}t\\
&=&\operatorname{Im}\frac{1}{\pi}\int_0^{\infty}u^{\beta-1}e^{-xu^\beta\cos\beta\pi+i\beta\pi-ixu^\beta\sin\beta\pi}\int_{0}^{\infty}t^{s-1}e^{-ut}\,\mathrm{d}t\,\mathrm{d}u\\
&=&\operatorname{Im}\frac{\Gamma(s)e^{i\beta\pi}}{\pi}\int_0^{\infty}u^{\beta-s-1}e^{-xe^{i\beta\pi}u^\beta}\,\mathrm{d}u\\
&=&\operatorname{Im}\frac{\Gamma(s)e^{i\beta\pi}}{\beta\pi}\int_0^{\infty}w^{-s/\beta}e^{-xe^{i\beta\pi}w}\,\mathrm{d}w,\ \ \ \ \mathrm{(substituting}\ w=u^\beta\mathrm{)}\\
&=&\frac{1}{\beta x^{1-s/\beta}}\frac{\Gamma\left(s\right)\Gamma\left(1-s/\beta \right)\sin{s\pi}}{\pi}.
\end{eqnarray*}
\end{proof}
\printbibliography
\end{document}